\newtheorem{theo}{Theorem}[section]
\newtheorem{defn}[theo]{Definition}
\newtheorem{rem}[theo]{Remark}
\newtheorem{exmp}[theo]{Example}
\newtheorem{prop}[theo]{Proposition}
\newtheorem{cor}[theo]{Corollary}
\newcommand{\R}{\mathbb{R}}
\newcommand{\Rmax}{\R_{\max}}
\newcommand{\Rmin}{\R_{\min}}
\newcommand{\bomega}{\bigoplus_{\omega}}
\newcommand{\idxset}{\mathcal{I}}
\newcommand{\solset}{\mathcal{S}}
\newcommand{\tred}[1]{\textcolor{red}{#1}}
\newcommand{\ceils}[1]{\lceil#1\rceil}
\title{Solving Linear Equations Over Maxmin-$\omega$ Systems}
\author{Muhammad Syifa'ul Mufid\footnote{Department of Mathematics, Institut Teknologi Sepuluh Nopember, Jl Arif Rahman Hakim, Surabaya,
            60111, Indonesia. Email: syifaul.mufid@matematika.its.ac.id}\and 
            Ebrahim Patel\footnote{The London Interdisciplinary School, Whitechapel Road, London, E1 1EW,  UK. Email: ebrahimp@yahoo.com}\and
            Serge\u{\i} Sergeev\footnote{University of Birmingham, School of Mathematics, Edgbaston B15 2TT, UK. Email: s.sergeev@bham.ac.uk}}
\date{}
\begin{document}
\maketitle
\makeatother

\begin{abstract}
Maxmin-$\omega$ dynamical systems were previously introduced as an ``all-in-one package'' that can yield a solely min-plus, a solely max-plus, or a max-min-plus dynamical system by varying a parameter $\omega\in(0,1]$. With such systems in mind, it is natural to introduce and consider maxmin-$\omega$ linear systems of equations of the type $A\otimes_{\omega} x=b$. However, to our knowledge, such maxmin-$\omega$ linear systems have not been studied before and in this paper we present an approach to solve them. We show that the problem can be simplified by performing normalization and then generating a ``canonical'' matrix which we call the principal order matrix. Instead of directly trying to find the solutions, we search the possible solution indices which can be identified using the principal order matrix and the parameter $\omega$. The fully active solutions are then immediately obtained from these solution indices. With the fully active solutions at hand, we then present the method to find other solutions by applying a relaxation, i.e., increasing or decreasing some components of fully active solutions. This approach can be seen as a generalization of an approach that could be applied to solve max-plus or min-plus linear systems.
Our results also shed more light on an unusual feature of maxmin-$\omega$ linear systems, which, unlike in the usual linear algebra, can have a finite number of solutions in the case where their solution is non-unique.

{\em Keywords:} tropical, maxmin-$\omega$, linear systems\\
{\em MSC classification:} 15A80, 15A06
\end{abstract}

\section{Introduction}
Discrete event systems (DES) are a class of dynamic systems in which the change of state is governed by events \cite{baccelli92,cassandras2008}. Typical DES applications include manufacturing systems \cite{imaev2008}, railway networks \cite{Heid2006}, queues \cite{Zimmerman2007}, and urban traffic systems \cite{dotoli2006}.  Since events in a DES occur in some order, there is often a need for synchronization, which could mean, for example, that a new event (or process) starts as soon as all preceding processes have finished.  Mathematically, this type of synchronization corresponds to the operation ``maximization" on the earliest starting time of an event and, since the
finishing time of an event equals the starting time plus
the duration, the operation ``addition" corresponds to the duration of events. This leads to a description that is linear in
the so-called max-plus algebra (today also called tropical linear algebra).



A more specific motivation for solving max-plus problems comes from operational applications such as the following job-scheduling task \cite{cuninghamegreen,Butkovic2010}. Suppose that products $P_1,\ldots,P_m$ are prepared using $n$ machines (processors), every machine contributing to the completion of each product by producing a component. It is assumed that each machine can work for all products simultaneously and that all these actions on a machine start as soon as the machine starts to work. Let $A(i,j)$ be the duration of the work of the $j$th machine needed to complete the component for $P_i$ $(i = 1,...,m;j = 1,...,n)$. If this interaction is not required for some $i$ and $j$ then $A(i,j)$ is set to $-\infty$. The matrix $A = (A(i,j))$ is called the production matrix. Let us denote by $x_j$ the starting time of the $j$-th machine ($j = 1,\ldots,n$). Then all components for $P_i$ ($i = 1,...,m$) will be ready at time
\[\max\{x_1 +A(i,1),\ldots,x_n +A(i,n)\}.\]
Hence if $b_1,\ldots, b_m$ are given completion times then the starting times have to satisfy the following system of equations.
$$\max(x_1 +A(i,1),\ldots,x_n +A(i,n))=b_i \text{ for all } i=1,...,m.$$
Using max-plus algebra this system can be written in a compact form as a system of linear equations:
\begin{equation}\label{equ:maxpluslinear}
    A\otimes x=b
\end{equation}
It is easy to decide if a system of the form \eqref{equ:maxpluslinear} is solvable \cite{Butkovic2010} and, in the case of solvability, to find a solution to this system. In fact, the isomorphism between max-plus and min-plus algebra means that the same is true if we replace the ``maximum" with the ``minimum" operator (and set $A(i,j) = +\infty$ when $i$ and $j$ do not interact) \cite{Gun1994}.

In this example, the minimum operator corresponds to the dual type of synchronization: when only the \textit{fastest} machine(s) contribute towards the completion of each product.  In this paper, we consider the intermediate case: what if completing only a proportion $\omega$ of machines was required for making a product?  In this case, we will write the system equations in a similar form as \eqref{equ:maxpluslinear}:
\begin{equation}\label{equ:maxminomega}
    A\otimes_\omega x=b
\end{equation}
noting the new operator $\otimes_\omega$ to indicate this new synchronization protocol, whereby $0<\omega \leq 1$ such that $\ceils{\omega n} = 1$ corresponds to the min-plus system and $\ceils{\omega n} = n$ yields the original max-plus system.  It turns out that such a system as \eqref{equ:maxminomega} is a mixture of maximum and minimum operators, that is, a system of ``max-min-plus" equations \cite{EPthesis}.  We therefore call it the ``maxmin-$\omega$ system".

It is well-known how to solve a max-plus system \eqref{equ:maxpluslinear}, see, e.g., \cite{Butkovic2003} for a comprehensive presentation.  We use an approach similar to \cite{Butkovic2003}, thereby extending those methods to a generalized linear max-min-plus system, specifically the maxmin-$\omega$ system.

For further context, the maxmin-$\omega$ system was introduced as a model of a network of processors such as those described above, where each processor requires only a fraction $\omega$ of machines to produce their component.  When seen as a network, whose nodes adopt some state, maxmin-$\omega$ requires that each node has knowledge of a fraction $\omega$ of neighborhood states before updating its own state  \cite{EPthesis}.  Moreover, knowledge of the neighborhood states is not synchronous; $A(i,j)\in\R$ is the time it takes for nodal state $j$ to be sent to node $i$.  Thus, $A\in\R^{n\times n}$ denotes the matrix of such transmission times associated to a network of $n$ nodes; in graph theoretical terms, this matrix is a weighted adjacency matrix.

By construction, $\omega>0$; otherwise, there is no transmission of information, that is, nothing happens.  We will, therefore, write ``$\omega\approx 0$" to represent the smallest value that $\omega$ can take.  When $\omega\approx 0$, we have a min-plus system, whilst $\omega=1$ yields max-plus dynamics; for $\omega\in(0,1)$, the system is a mixture, i.e., a max-min-plus system.  The next section will introduce max-plus and min-plus algebra more formally but, suffice it to say that, for the motivating application thus far discussed, a min-plus system is one whose nodes wait for the first neighborhood input before updating, whilst a max-plus system requires all neighborhood inputs to arrive before a node updates its state. As already alluded to, the literature on such systems has focused mainly on the two (min-plus and max-plus) extremes.  Applications are not as forthcoming for the intermediate case, but this should not preclude the natural step towards addressing the gap.  Thus, we proceed to consider the case $0<\omega<1$ as both a purely mathematical exercise as well as in anticipation of applications to this effect.  

The rest of this paper is structured as follows. Section 2 presents the basic descriptions on max-plus and min-plus algebra and also maxmin-$\omega$ operation. The problem formulation and the preliminary results are also described in this section. Section 3 discusses the problem when each column of the matrix is distinct while Section 4 addresses the general case by considering the existence of duplicates in each column. The method presented in Sections 3-4 is to find the so-called ``fully active'' solution. Then, in Section 5, we discuss the approach to find other solutions by applying ``relaxation'' i.e., increasing or decreasing some components of each fully active solution. This section also presents the discussion related to the number of solutions. Unlike the classical linear equations in max-plus or min-plus algebra, it is possible that the number of solutions is not unique but finite.
Finally, we present the concluding remarks and the direction for future work in Section 6.






\section{Preliminaries}
This section presents the notations, definitions and some basic results which will be used in the next sections.

Throughout this paper, we always assume that $m,n\geq 1$ are integers and define $M:=\{1,\ldots,m\}$ and $N:=\{1,\ldots,n\}$. The tuple $(a_1,a_2,\ldots, a_n)$ with $n$ elements is a \textit{permutation} of $N$ if $\{a_1,a_2,\ldots,a_n\}=N$. For each multiset (i.e., a set with possible repetitions of elements) $S$, the notation $|S|$ refers to the cardinality (i.e., the number of elements) of $S$. For $0\leq k\leq |S|$, we define $\mathcal{P}(S,k)$ as the set of all $k$-(multi)subsets of $S$. 

The notations $\varepsilon$ and $\Rmax$ stand for $-\infty$ and $\R \cup\{\varepsilon\}$, respectively. For each $a,b\in \Rmax$, we set
\begin{equation}
a\oplus b = \max\{a,b\}~\text{and}~a\otimes b:=a+b.
\label{eq:max-plus-operations}
\end{equation}
The so-called \textit{max-plus algebra} is a semiring $(\Rmax,\oplus,\otimes)$ with $\varepsilon$ and $0$ as the zero and unit elements, respectively \cite{baccelli92}. We denote $\Rmax^{m\times n}$ as the set of $m\times n$ matrices over max-plus algebra. Similarly, $\Rmax^m$ refers to the set of vectors with $m$ elements in max-plus algebra. 

For a matrix $A$, the notation $A(i,j)$ represents the entry of matrix $A$ at $i$-th row and $j $-th column. Furthermore, $A(i,\cdot)$ and $A(\cdot,j)$ are the $i$-th row and $j$-th column of $A$, respectively. The operation \eqref{eq:max-plus-operations} can be extended to matrices and vectors as in conventional linear algebra. For $ A,B\in \Rmax^{m\times n}, C\in \Rmax{n\times p}$ and $\alpha\in\Rmax$, 
\begin{align*}
\nonumber [A\oplus B](i,j) &= A(i,j)\oplus B(i,j)=\max\{A(i,j),B(i,j)\},\\
\nonumber [A\otimes C](i,j) &= \bigoplus_{k=1}^n A(i,k)\otimes C(k,j)=\max_{1\leq k\leq n}\{A(i,k)+C(k,j)\},\\
\nonumber [\alpha\otimes A](i,j) &= \alpha\otimes A(i,j)=\alpha+A(i,j).
\end{align*}

Min-plus algebra, the dual of max-plus algebra, is a semiring $(\Rmin,\oplus^\prime, \otimes^\prime)$ where $\Rmin:=\R\cup\{\xi:=+\infty\}$,
\begin{equation}
    \label{eq:minplus_operations}
    a\oplus^\prime b:=\min\{a,b\}~\text{and}~a\otimes^\prime b=a+b.
\end{equation}
The notations and operations with matrices in min-plus algebra are defined similarly to those of max-plus algebra. Observe that $a\oplus^\prime b=-((-a)\oplus (-b))$.

Given a multiset of real numbers $S=\{s_1,s_2,\ldots,s_n\}$, the \textit{maxmin-$\omega$} operation 
\begin{equation}
\label{eq:omega-operation}
    \bomega S
\end{equation}
yields the $\ceils{\omega n}$-th smallest element of $S$ for $0<\omega \leq 1$. One can say that \eqref{eq:omega-operation}  is the generalization for $\oplus$ and $\oplus^\prime$ operations. It is straightforward to see that when $\ceils{\omega n}=1$ $(\omega\approx 0)$, operation \eqref{eq:omega-operation} corresponds to min-plus addition. On the other hand, if $\lceil \omega n \rceil=n$ $(\omega=1)$ then \eqref{eq:omega-operation} is a max-plus addition. 

The following proposition shows that \eqref{eq:omega-operation} can be expressed as the combination of $\oplus$ and $\oplus^\prime$ operations.

\begin{prop}[\cite{EPthesis}]
Given a multiset $S\subseteq \mathbb{R} $ with $|S|=n$ and suppose $p=\lceil \omega n\rceil$, then
\begin{align}
\label{eq:bomega-min}
    \bomega S &= \bigoplus_{P\in \mathcal{P}(S,p)}\hspace*{-2ex}{}^\prime\left\{ \bigoplus P\right\}\\
    \label{eq:bomega-max}
    &= \bigoplus_{P\in \mathcal{P}(S,n+1-p)}\left\{ \bigoplus{}^\prime P\right\}
\end{align}
\end{prop}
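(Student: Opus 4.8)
The plan is to reduce both identities to the elementary order statistics of $S$. Relabel the elements of $S$ in non-decreasing order, $s_1\le s_2\le\cdots\le s_n$, so that by definition $\bomega S = s_p$ with $p=\ceils{\omega n}$. Identifying a $k$-multisubset of $S$ with a choice of $k$ of the $n$ index positions $1,\dots,n$ (respecting multiplicities), the two right-hand sides become $\min_{P\in\mathcal{P}(S,p)}\bigoplus P$ and $\max_{P\in\mathcal{P}(S,n+1-p)}\bigoplus{}^\prime P$, i.e.\ $\min_{P}\max P$ and $\max_{P}\min P$ over the respective families; it then suffices to show each of these equals $s_p$, which I would do by the standard two-step ``bound plus an attaining subset'' argument.

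For \eqref{eq:bomega-min}: if $P$ is a $p$-multisubset, then among its $p$ index positions the largest is at least $p$, hence $\max P\ge s_p$; conversely $P_0=\{s_1,\dots,s_p\}$ is a $p$-multisubset with $\max P_0=s_p$. Therefore $\min_P\max P=s_p$, which is exactly \eqref{eq:bomega-min}.

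For \eqref{eq:bomega-max} I would argue dually. If $P$ is an $(n+1-p)$-multisubset, its $n+1-p$ index positions cannot all lie in $\{p+1,\dots,n\}$, a set of only $n-p$ positions, so $\min P\le s_p$; conversely $P_1=\{s_p,s_{p+1},\dots,s_n\}$ has exactly $n+1-p$ elements and $\min P_1=s_p$. Hence $\max_P\min P=s_p$, giving \eqref{eq:bomega-max}. Alternatively, \eqref{eq:bomega-max} can be deduced from \eqref{eq:bomega-min} applied to the multiset $-S:=\{-s:s\in S\}$, using $\bigoplus{}^\prime P=-\bigoplus(-P)$ together with the index shift $\ceils{\omega n}\mapsto n+1-\ceils{\omega n}$.

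The only point requiring care is pure bookkeeping: carrying multiplicities through the ``these positions cannot all fit among the $p-1$ smallest / the $n-p$ largest slots'' counting, and checking that the complementary cardinality is exactly $n+1-p$ and the index shift in the dual derivation is off by nothing. No substantive difficulty arises beyond the pigeonhole principle.
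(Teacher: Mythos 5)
Your proof is correct and complete. Note that the paper itself gives no proof of this proposition --- it is quoted from \cite{EPthesis} --- so there is nothing to compare against; your argument (sort $S$, identify multisubsets with index positions, and establish $\min_P \max P = s_p = \max_P \min P$ via a bound plus an attaining witness) is the standard order-statistics/pigeonhole proof, and the multiplicity bookkeeping and the cardinality $n+1-p$ in the dual identity both check out.
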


Properties~\eqref{eq:bomega-min} and~\eqref{eq:bomega-max} show that the operator $A\otimes_{\omega}$ is a min-max function, i.e., it belongs to the class of functions considered in \cite{GG-98,Gun1994,Subiono,SO-97}.
\subsection{Linear Equations for Maxmin-$\omega$ Systems}
Given a matrix $A\in \R^{m\times n}$ and $b\in\R^m$, the corresponding linear equation for the maxmin-$\omega$ systems is a problem to find vectors $x\in \R^n$ such that
\begin{equation}
\label{eq:problem}
A\otimes_\omega x=b.
\end{equation}
\begin{defn}
\normalfont
\label{defn:normalized}
The linear equation \eqref{eq:problem} is called \textit{normalized} if $b=\textbf{0}$, where $\textbf{0}$ is a vector whose all elements are 0.
\end{defn}

Any linear equation \eqref{eq:problem} can be normalized by defining a matrix $A^\ast$ where
\[
A^\ast(i,\cdot) = -b_i\otimes A(i,\cdot).
\]
It is easy to see that $A\otimes_\omega x=b$ if and only if $A^\ast\otimes_\omega x=\textbf{0}$. The following proposition asserts a trivial condition in which a normalized linear equation \eqref{eq:problem} is not solvable.
\begin{prop}
Given a normalized linear equation \eqref{eq:problem}. If there are $i,j\in M$ such that $A(i,\cdot)>A(j,\cdot)$, then \eqref{eq:problem} is not solvable for any $\omega$. 
\end{prop}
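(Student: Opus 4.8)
The plan is to show that the hypothesis forces the $i$-th and $j$-th coordinates of $A\otimes_\omega x$ to be \emph{strictly} separated for every $x\in\R^n$ and for every admissible $\omega$. Since a solution of the normalized equation \eqref{eq:problem} (with $b=\mathbf{0}$) would have to make both of these coordinates equal to $0$, no such $x$ can exist, and the claim follows.

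First I would unwind the definitions. Put $p=\ceils{\omega n}$, so that $1\le p\le n$ because $\omega\in(0,1]$ and $n\ge 1$. For any $x\in\R^n$, the coordinate $[A\otimes_\omega x](i)$ is, by definition of $\otimes_\omega$, the $p$-th smallest element of the multiset $S_i(x):=\{A(i,k)+x_k:k\in N\}$, and likewise $[A\otimes_\omega x](j)$ is the $p$-th smallest element of $S_j(x):=\{A(j,k)+x_k:k\in N\}$. Reading the hypothesis $A(i,\cdot)>A(j,\cdot)$ entrywise, i.e.\ $A(i,k)>A(j,k)$ for all $k\in N$, we obtain $A(i,k)+x_k>A(j,k)+x_k$ for every index $k$ and every $x$.

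The one step that genuinely requires an argument is a monotonicity property of order statistics: if $a_k>b_k$ for all $k\in N$, then the $p$-th smallest element of $\{a_k\}_{k\in N}$ is strictly larger than the $p$-th smallest element of $\{b_k\}_{k\in N}$. To see this, choose a permutation $\sigma$ of $N$ with $a_{\sigma(1)}\le\cdots\le a_{\sigma(n)}$, so that the $p$-th smallest of $\{a_k\}$ equals $t:=a_{\sigma(p)}$. The $p$ indices $\sigma(1),\dots,\sigma(p)$ then satisfy $b_{\sigma(\ell)}<a_{\sigma(\ell)}\le t$ for $\ell=1,\dots,p$, so at least $p$ entries of $\{b_k\}$ lie strictly below $t$. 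Since every index with $b_k\ge t$ carries a value at least as large as every index with $b_k<t$, the $p$ smallest entries of $\{b_k\}$ all lie strictly below $t$; in particular the $p$-th smallest of $\{b_k\}$ is $<t$.

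Applying this with $a_k=A(i,k)+x_k$ and $b_k=A(j,k)+x_k$ yields $[A\otimes_\omega x](i)>[A\otimes_\omega x](j)$ for every $x\in\R^n$, so these two coordinates are never simultaneously equal to $0$; hence \eqref{eq:problem} with $b=\mathbf{0}$ has no solution for any $\omega\in(0,1]$. I do not anticipate a serious obstacle, since the proof is short; the only place to be careful is the order-statistics fact in the third paragraph — specifically the observation that the $p$ smallest values of a multiset of reals are attained at the ``small'' indices — which is exactly where the total ordering of $\R$ is used.
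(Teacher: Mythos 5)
Your proof is correct. The paper states this proposition without proof, calling it a ``trivial condition,'' and your argument supplies exactly the justification that is left implicit: the strict monotonicity of the $p$-th order statistic under entrywise strict domination, which forces $[A\otimes_\omega x](i)>[A\otimes_\omega x](j)$ for every $x$ and every $p=\ceils{\omega n}$, so the two coordinates can never both equal $0$. The order-statistics lemma in your third paragraph is stated and proved correctly, so there is no gap.
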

\begin{defn}
\normalfont
\label{defn:col_distinct}
Suppose we have a normalized linear equation \eqref{eq:problem}. If each column of $A$ has distinct elements, then we call it \textit{column distinct}; otherwise it is \textit{column indistinct}.
\end{defn}

We define
\begin{align}
\label{eq:solution-set}
\mathcal{S}(A,\omega) &= \{x\in \mathbb{R}^n\mid A\otimes_\omega x=\textbf{0}\}
\end{align}
as the set of solutions for normalized system \eqref{eq:problem}. If $x=[x_1\ldots x_n]^\top \in \mathcal{S}(A,\omega)$, then for each $i\in M$ there exists at least one $j\in N$ such that $A(i,j)+x_j=0$. The corresponding $A(i,j)$ is called as an ``active element'' of $A$ w.r.t. $x$ and $\omega$. We call a solution $x$ of \eqref{eq:problem} as ``fully active'' if there is at least one active element in each column of $A$. 

We define a set of tuples corresponding to the active elements as follows
\begin{align*}
    \mathcal{I}(A,\omega,x) &=\{(i_1,i_2,\ldots,i_n)\mid A(i_k,k)+x_{k}=0 ~\text{for all}~k\in N\}.
\end{align*}
It should be noted that we may have $|\mathcal{I}(A,\omega,x)|>1$ because, for some $i_k\in M$, we may have many distinct $k$ such that $A(i_k,k)+x_{k}=0$. It is also possible that $|\mathcal{I}(A,\omega,x)|=0$; that is, only when $x$ is not a fully active solution.
Finally, we define the set of ``solution indices'' for \eqref{eq:problem} as follows
\begin{align}
    \label{eq:solution-index}
    \mathcal{I}(A,\omega) = \bigcup_{x\in \mathcal{S}(A,\omega)} \mathcal{I}(A,\omega,x).
\end{align}
It is straightforward to see that $|\idxset|\leq |\solset|$. Furthermore, if $\solset$ is not empty then so is $\idxset$.

The linear equation problem has been well studied for max-plus and min-plus setups; that is, when $\omega\approx 0$ or $\omega =1$ in \eqref{eq:problem}. As discussed in \cite{Butkovic2010}, the solvability of the problem can be determined from a \textit{principal (candidate) solution} $\bar{x}=[\bar{x}_1\cdots \bar{x}_n]^\top$ defined\footnote{In \cite{Butkovic2010}, only the max-plus case is formally considered. However, similar results can be obtained in the min-plus case due to the isomorphism between the max-plus and min-plus semirings.} by
\begin{equation}
    \label{eq:principal-solution}
\bar{x}_k = -\bomega A(\cdot,k)
\end{equation}
and the sets $M_1,\ldots,M_n$ defined by
\begin{equation}
    \label{eq:principal-sets}
    M_k = \left\{ i\in M \mid A(i,k)=\bomega A(\cdot,k)\right\}.
\end{equation}
\begin{prop}[\cite{Butkovic2010}]
Given a normalized linear equation \eqref{eq:problem} for $\omega\approx 0$ or $\omega =1$.  Suppose $\bar{{x}}$ is defined by \eqref{eq:principal-solution} and $M_1,\ldots,M_n$ defined by \eqref{eq:principal-sets}. The following statements are equivalent
\begin{itemize}
    \item[{\rm (a)}] $\mathcal{S}(A,\omega)\neq \emptyset$,
    \item[{\rm (b)}] $\bar{x}\in \mathcal{S}(A,\omega)$,
    \item[{\rm (c)}] $\displaystyle \bigcup_{k\in N} M_k=M$.
\end{itemize}
\end{prop}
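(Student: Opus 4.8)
This is the classical max-plus/min-plus solvability criterion (cf.\ \cite{Butkovic2010}) specialized to the two extreme values of $\omega$, and the plan is to recover it by the standard ``greatest subsolution'' (residuation) argument, handling the min-plus case by duality. First I would record the simplification afforded by the hypothesis on $\omega$: for $\omega=1$ the operation $\bomega$ on a finite multiset is just $\max$, so $\bar x_k=-\max_{i\in M}A(i,k)$ and $M_k=\{i\in M:A(i,k)=\max_{l}A(l,k)\}$; for $\omega\approx 0$ the same formulas hold with $\min$ in place of $\max$. Using $a\oplus' b=-((-a)\oplus(-b))$ (noted in the preliminaries), the substitution $A\mapsto -A$, $x\mapsto -x$ turns the min-plus instance of \eqref{eq:problem} into a max-plus one and carries $\bar x$ to $-\bar x$ while leaving the sets $M_k$ and the validity of (a)--(c) unchanged; so it suffices to treat $\omega=1$.

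Assume then $\omega=1$. The key step is to establish that $\bar x$ is the greatest subsolution: (i) $A\otimes\bar x\le\textbf{0}$ always, since $[A\otimes\bar x]_i=\max_k\bigl(A(i,k)-\max_l A(l,k)\bigr)\le 0$; and (ii) every $x\in\R^n$ with $A\otimes x\le\textbf{0}$ satisfies $x\le\bar x$ coordinatewise, because $A(i,k)+x_k\le 0$ for all $i,k$ forces $x_k\le-\max_i A(i,k)=\bar x_k$. Granting (i)--(ii), the implication (a)$\Rightarrow$(b) follows at once: if $x\in\solset(A,\omega)$ then $A\otimes x=\textbf{0}\le\textbf{0}$, hence $x\le\bar x$ by (ii), and monotonicity of $y\mapsto A\otimes y$ together with (i) gives $\textbf{0}=A\otimes x\le A\otimes\bar x\le\textbf{0}$, i.e.\ $\bar x\in\solset(A,\omega)$; and (b)$\Rightarrow$(a) is trivial because $\bar x\in\R^n$.

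Finally I would close the loop with (b)$\Leftrightarrow$(c). By (i), each coordinate $[A\otimes\bar x]_i=\max_k(A(i,k)+\bar x_k)$ is a maximum of finitely many nonpositive numbers, so it equals $0$ iff $A(i,k)+\bar x_k=0$ for some $k$, equivalently $A(i,k)=-\bar x_k=\bomega A(\cdot,k)$ for some $k$, equivalently $i\in\bigcup_{k\in N}M_k$. Hence $A\otimes\bar x=\textbf{0}$ iff every $i\in M$ lies in $\bigcup_{k\in N}M_k$; since $\bigcup_{k}M_k\subseteq M$ automatically, this is precisely condition (c).

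I do not expect a genuine obstacle here: the argument is routine, and the only care needed is the bookkeeping of the min-plus dualization. It is worth noting, however, that the whole scheme works exactly because at $\omega\in\{0^{+},1\}$ the map $A\otimes_\omega(\cdot)$ is a bona fide max-plus (resp.\ min-plus) linear operator, which is what supplies the residuation inequality in step (ii); for intermediate $\omega$ this fails, which is why the remainder of the paper must develop a different method.
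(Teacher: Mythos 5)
Your proof is correct. Note that the paper does not prove this proposition at all --- it is imported verbatim from \cite{Butkovic2010} as a known result --- so there is no in-paper argument to compare against; your residuation argument (showing $\bar{x}$ is the greatest subsolution, deducing (a)$\Rightarrow$(b) by monotonicity, and reading off (b)$\Leftrightarrow$(c) from which rows attain a zero) is exactly the standard proof in the cited reference, with the min-plus case correctly reduced to the max-plus one via the negation isomorphism $a\oplus' b=-((-a)\oplus(-b))$.
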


\begin{prop}[\cite{Butkovic2010}]
\label{prop:uniquenes-butkovic}
Given a normalized linear equation \eqref{eq:problem} for $\omega\approx 0$ or $\omega =1$. Suppose $\bar{x}$ is defined by \eqref{eq:principal-solution} and $M_1,\ldots,M_n$ defined by \eqref{eq:principal-sets}. $\mathcal{S}(A,\omega)=\bar{x}$ if and only if
\begin{itemize}
    \item[{\rm (i)}] $\displaystyle \bigcup_{k\in N} M_j=M$.
    \item[{\rm (ii)}] $\displaystyle \bigcup_{k\in N^\prime} M_j\neq M$ for any $N^\prime\subseteq N,N^\prime\neq N$.
\end{itemize}
\end{prop}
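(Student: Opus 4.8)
The plan is to reduce the statement to the max-plus case $\omega=1$ and then exploit the fact that the principal solution $\bar x$ is the \emph{greatest} candidate solution. By the isomorphism between the max-plus and min-plus semirings, the $\omega\approx 0$ case follows verbatim from the $\omega=1$ case: the substitution $A\mapsto -A$, $x\mapsto -x$ turns $\oplus'$ into $\oplus$, sends the principal solution $\bar x_k=-\bomega A(\cdot,k)=-\min_i A(i,k)$ to the max-plus principal solution of $-A$, and carries each $M_k$ to the corresponding set for $-A$. So from now on I would assume $\omega=1$, i.e. $\bomega A(\cdot,k)=\max_{i\in M}A(i,k)$ and $M_k=\{i\in M: A(i,k)=\max_{i'}A(i',k)\}$.

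The key step is an order-theoretic description of $\mathcal S(A,1)$, classical for max-plus systems (cf.\ \cite{Butkovic2010,Butkovic2003}), which I would state and prove first as an auxiliary claim: $\bar x$ is the greatest $x\in\R^n$ with $A\otimes x\le\mathbf 0$, that is, $A\otimes\bar x\le\mathbf 0$, and $A\otimes x\le\mathbf 0\Longleftrightarrow x\le\bar x$ componentwise. Consequently $x\in\mathcal S(A,1)$ iff $x\le\bar x$ and each row $i$ has an \emph{active} column, i.e.\ some $k$ with $A(i,k)+x_k=0$. Under $x\le\bar x$ one has $A(i,k)+x_k\le A(i,k)+\bar x_k\le 0$, so $A(i,k)+x_k=0$ forces both $x_k=\bar x_k$ and $A(i,k)=\bomega A(\cdot,k)$, i.e.\ $k\in N_x:=\{k\in N: x_k=\bar x_k\}$ and $i\in M_k$; conversely any such $k$ is active. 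This yields the compact equivalence
\[
  x\in\mathcal S(A,1)\ \Longleftrightarrow\ x\le\bar x\ \text{ and }\ \bigcup_{k\in N_x}M_k=M ,
\]
which in particular contains the equivalence (a)$\Leftrightarrow$(c) of the preceding proposition (take $x=\bar x$, so $N_x=N$).

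With the claim in hand both implications are short. For ``$\Leftarrow$'': assuming (i), the claim applied at $x=\bar x$ gives $\bar x\in\mathcal S(A,1)$, so the solution set is nonempty; for an arbitrary $x\in\mathcal S(A,1)$ the claim gives $\bigcup_{k\in N_x}M_k=M$, and then (ii) forces $N_x=N$, i.e.\ $x=\bar x$, so $\mathcal S(A,1)=\{\bar x\}$. For ``$\Rightarrow$'': $\mathcal S(A,1)=\{\bar x\}$ means $\bar x$ is a solution, so the claim (at $x=\bar x$) gives $\bigcup_{k\in N}M_k=M$, which is (i); and if (ii) failed there would be a proper $N'\subsetneq N$ with $\bigcup_{k\in N'}M_k=M$, whence the vector $x$ defined by $x_k=\bar x_k$ for $k\in N'$ and $x_k=\bar x_k-1$ for $k\notin N'$ satisfies $x\le\bar x$ and $N_x=N'$, hence $x\in\mathcal S(A,1)$ by the displayed equivalence, yet $x\ne\bar x$ since some coordinate was strictly lowered — contradicting uniqueness. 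Thus (ii) holds.

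I expect the only delicate points to be bookkeeping rather than conceptual: tracking the sign flips in the min-plus reduction so that $\bar x$ and the sets $M_k$ are carried to the right objects, and checking that the relaxed vector in the ``$\Rightarrow$'' direction stays in $\R^n$ — which it does, since lowering finitely many finite coordinates by any fixed positive amount keeps them real. Everything else rests on the greatest-solution property of $\bar x$, which may simply be quoted from \cite{Butkovic2010}.
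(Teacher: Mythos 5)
Your proof is correct. Note, however, that the paper does not prove this proposition at all: it is quoted as a known result from \cite{Butkovic2010}, so there is no in-paper argument to compare against. Your write-up is essentially the standard textbook proof: the residuation/greatest-solution property ($A\otimes x\le\mathbf 0\Leftrightarrow x\le\bar x$), the observation that under $x\le\bar x$ a row $i$ is covered exactly by those $k$ with $x_k=\bar x_k$ and $i\in M_k$, the resulting characterization $x\in\mathcal S(A,1)\Leftrightarrow x\le\bar x$ and $\bigcup_{k\in N_x}M_k=M$, and the explicit perturbation $x_k=\bar x_k-1$ off $N'$ to witness non-uniqueness when (ii) fails. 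The min-plus case via $A\mapsto -A$, $x\mapsto -x$ is also handled correctly (the sets $M_k$ and the principal solution do transform as you claim). All steps check out; the only cosmetic remark is that the proposition's ``$\mathcal S(A,\omega)=\bar x$'' should be read as $\mathcal S(A,\omega)=\{\bar x\}$, which is the reading you adopt.
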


In Sections \ref{sec:colum-distinct}-\ref{sec:general-case}, we will discuss the methods to solve \eqref{eq:problem} for any $\omega \in (0,1]$ in which the matrices have only finite elements i.e., $A\in\R^{m\times n}$. As expected, the resulting technique coincides with the ones discussed in \cite{Butkovic2010} for $\omega\approx 0$ or $\omega =1$. 

\section{Column Distinct Cases}
\label{sec:colum-distinct}

This section discusses the strategies to solve \eqref{eq:problem} when $A\in \R^{m\times n}$ is a column distinct matrix. Instead of directly trying to find $x\in \solset$, our methods will look for the tuple $(i_1,\ldots,i_n)\in \idxset$. Notice that, such a tuple corresponds to a fully active solution. 
Propositions \ref{prop:modif-matrix}-\ref{prop:candidate-solutions} provide the conditions for such tuples while Proposition~\ref{prop:fully active-idx} shows the uniqueness of solution index for each fully active solution.
\begin{prop}
\label{prop:fully active-idx}
    Given a normalized \eqref{eq:problem} where $A\in \R^{m\times n}$ is a column distinct matrix. If $x\in \solset$ is a fully active, then $|\mathcal{I}(A,\omega,x)|=1$. 
\end{prop}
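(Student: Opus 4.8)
The plan is to exploit column distinctness to pin down, in each column $k$, at most one admissible row index, and to use full activeness to guarantee that such an index actually exists; the singleton $\idxset$-tuple then reads off uniquely from the columns.

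First I would fix a fully active $x\in\solset$ and, for each $k\in N$, introduce the set $J_k=\{i\in M\mid A(i,k)+x_k=0\}$ of rows carrying an active element in column $k$. By the definition of ``fully active'', each $J_k$ is nonempty. Note that the parameter $\omega$ does not appear in the definition of $\mathcal{I}(A,\omega,x)$, so it plays no direct role here beyond ensuring (via $x\in\solset$) that the notion of active element is well posed.

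Next I would invoke column distinctness. The constraint $A(i,k)+x_k=0$ is equivalent to $A(i,k)=-x_k$, and since the entries of the column $A(\cdot,k)$ are pairwise distinct, this equation has at most one solution $i\in M$. Combined with $J_k\neq\emptyset$, this gives $|J_k|=1$; write $J_k=\{i_k\}$. Finally, unwinding the definition of $\mathcal{I}(A,\omega,x)$, a tuple $(j_1,\ldots,j_n)$ belongs to it precisely when $j_k\in J_k$ for every $k$, which forces $j_k=i_k$ for all $k$. Hence $\mathcal{I}(A,\omega,x)=\{(i_1,\ldots,i_n)\}$, so $|\mathcal{I}(A,\omega,x)|=1$.

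I do not expect a genuine obstacle; the argument is short. The only point that warrants care is keeping the two hypotheses' roles separate: full activeness supplies \emph{existence} of an active element in each column (ruling out cardinality $0$), while column distinctness supplies \emph{uniqueness} within each column (ruling out cardinality $\geq 2$), and neither hypothesis alone suffices for the conclusion. Making this bookkeeping explicit is the whole content of the proof.
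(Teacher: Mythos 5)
Your proposal is correct and follows essentially the same route as the paper, which defines $C(i,j)=A(i,j)+x_j$ and observes that column distinctness forces exactly one zero per column of $C$; your sets $J_k$ are just the columnwise zero sets of that matrix. If anything, you are slightly more careful than the paper in separating the two hypotheses (full activeness for existence, column distinctness for uniqueness), but the argument is the same.
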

\begin{proof}
Let us define a matrix $C$ where $C(i,j)=A(i,j)+x_j$ for $i\in M$ and $j\in N$. Notice that, the active elements of $A$ w.r.t. $x$ and $\omega$ corresponds to the zeros of $C$. Since $A$ is column distinct, there is exactly one zero element in each column of $C$. Hence, $|\mathcal{I}(A,\omega,x)|=1$.
\end{proof}




\begin{prop} 
\label{prop:modif-matrix}
Consider a normalized system \eqref{eq:problem} where $A\in \R^{n\times n}]$ is a column distinct matrix and suppose that a matrix $B$ is generated by modifying a single element of $A$ $($say $A(k,l))$. If the ordering of elements for $A(\cdot,l)$ and $B(\cdot, l)$ is the same, then $\idxset(A)=\idxset(B)$ for each $\omega\in (0,1]$.
\end{prop}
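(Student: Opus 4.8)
The plan is to show that whether a tuple $(i_1,\ldots,i_n)$ belongs to $\idxset$ is decided solely by the \emph{relative order} of the entries inside each column of the matrix, never by their precise values. Since replacing $A(k,l)$ by $B(k,l)$ alters neither the order of entries within column $l$ (by hypothesis) nor any entry outside column $l$, the equality $\idxset(A)=\idxset(B)$ will drop out. Note first that the hypothesis forces $B$ to be column distinct as well: $A(\cdot,l)$ is strictly ordered, and $B(\cdot,l)$ has the same ordering.

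First I would reformulate $\idxset$. For a tuple $t=(i_1,\ldots,i_n)\in M^n$ there is at most one vector it can index, namely $x^t$ with $x^t_k=-A(i_k,k)$, because the defining condition $A(i_k,k)+x_k=0$ pins $x_k$ down. Hence, directly from the definitions, $t\in\idxset(A,\omega)$ if and only if $x^t\in\solset(A,\omega)$, i.e.\ if and only if for every row $i\in M$ the $\ceils{\omega n}$-th smallest element of the multiset $V^{A}_i:=\{\,A(i,k)-A(i_k,k)\mid k\in N\,\}$ equals $0$; and likewise $t\in\idxset(B,\omega)$ iff for every $i$ the $\ceils{\omega n}$-th smallest of $V^{B}_i:=\{\,B(i,k)-B(i_k,k)\mid k\in N\,\}$ equals $0$ (here $x^t,y^t$ lie in $\R^n$ since $A,B$ are finite).

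Next I would record the elementary fact that, for a real multiset $V$ of size $n$ with $p:=\ceils{\omega n}$, the $p$-th smallest element of $V$ equals $0$ if and only if $|\{v\in V\mid v<0\}|<p\leq|\{v\in V\mid v\leq 0\}|$; in particular this event depends only on the triple of counts of strictly negative, zero, and strictly positive members of $V$. So it suffices to prove that for each fixed row $i$ the ``sign profile'' of $V^{A}_i$ coincides with that of $V^{B}_i$. For every column $k\neq l$ the two differences are literally equal, since $B$ agrees with $A$ off column $l$. For column $l$ one checks the cases according to whether $i$ equals the modified row $k$ and whether $i_l=k$: if $i=i_l=k$ the difference is $0$ in both $V^A_i$ and $V^B_i$; in the remaining cases the pair of differences is of the form $A(k,l)-A(s,l)$ versus $B(k,l)-A(s,l)$ (or $A(s,l)-A(k,l)$ versus $A(s,l)-B(k,l)$) with $s\neq k$, and the hypothesis that $A(\cdot,l)$ and $B(\cdot,l)$ induce the same ordering guarantees these two differences are nonzero of the same sign. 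Thus $V^A_i$ and $V^B_i$ have identical sign profiles.

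The only place demanding care is this last, case-based comparison, in particular the subcase $i_l=k$, where the zero entry of the difference multiset sits in column $l$ itself and one must confirm it remains zero (it does, as $B(k,l)-B(k,l)=0$). Granting the sign-profile equality for every row, the count criterion gives $x^t\in\solset(A,\omega)\iff y^t\in\solset(B,\omega)$, hence $t\in\idxset(A,\omega)\iff t\in\idxset(B,\omega)$; since $t\in M^n$ and $\omega\in(0,1]$ were arbitrary, $\idxset(A)=\idxset(B)$.
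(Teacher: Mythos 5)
Your proof is correct and follows essentially the same route as the paper's: both arguments reduce membership of a tuple in $\idxset$ to a per-row count of the negative, zero, and positive entries of $A(i,j)+x_j$, and observe that these signs depend only on the relative order of entries within each column, which the hypothesis preserves. Your symmetric formulation via the candidate vector $y^t$ with $y^t_j=-B(i_j,j)$ neatly absorbs, in one biconditional, the paper's separate case $i_l=k$ (where the paper explicitly adjusts $x_l$ to $-B(i_l,l)$), but the underlying idea is identical.
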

\begin{proof}
We only provide the proof for $\idxset(A)\subseteq \idxset(B)$. The proof for the relation $\idxset[B]\subseteq \idxset(A)$ can be written similarly. 

Suppose that $(i_1,\ldots,i_n)\in \idxset$. Then, there exists a vector $x=[x_1~\cdots~x_n]^\top\in \solset$ such that $x_{j}=-A(i_j,j)$ for $j\in N$. Let us define two matrices $C_1,C_2$ where $C_1(i,j)=A(i,j)+x_j$ and $C_2(i,j)=B(i,j)+x_j$ for $i\in M$ and $j\in N$. Indeed, observe that the ordering of $C_1(\cdot,l)$ and $C_2(\cdot,l)$ is also the same. Notice that for each $i\in N$ there is at least one zero element in $C_1(i,\cdot)$. Furthermore, there are at most $p-1$ negative values and at most $n-p$ positive values in $C_1(i,\cdot)$ for any $i$, where $p=\lceil \omega n\rceil$. Since matrices $A$ and $B$ only differ at the $k$-th row, we only need to consider $C_1(k,\cdot)$ and $C_2(k,\cdot)$. We have two possible cases: $i_l\neq k$ and $i_l=k$.

The first case happens when $A(k,l)$ is not active element. Furthermore, $C_1(k,l)$ and $C_2(k,l)$ have the same sign (either both positive or both negative), since the ordering of $C_1(\cdot,l)$ and $C_2(\cdot,l)$ is the same and $0$ occurs in both columns in the same positions. Consequently, the number of positive (resp. negative) elements at $C_2(k,\cdot)$ is at most $p-1$ (resp. $n-p$), respectively. Thus, $x\in \solset(B)$ and $(i_1,\ldots,i_n)\in \idxset(B)$. For the second case, we have $C_1(i_l,l)=0$ but $C_2(i_l,l)\neq 0$. In this case, it can be seen that the vector $y=[y_1 ~\cdots~y_n]$ where
\begin{align*}
y_{j}=\left\{
    \begin{array}{ll}
        x_{j} & ~\text{if}~j\neq l, \\
         -B(i_l,l)&~\text{if}~j=l,
    \end{array}\right.
\end{align*}
satisfies $B\otimes_\omega y=\textbf{0}$ and $(i_1,\ldots,i_n)\in \idxset(B)$. 
\end{proof}

Assuming $A\in \R^{m\times n}$, we define matrix $\bar A$ as follows: for each $j\in N$
\begin{equation}
\label{principal}
    \bar A(i,j) = k~\text{if}~\bigoplus\hspace*{-0.5ex} {}_{\frac{k}{m}} A(\cdot,j)=A(i,j),
\end{equation}
where $k\in M$. For further reference, we call $\bar A$ the \textit{principal order matrix} associated with $A$. Notice that, $\bar{A}(i,j)=k$ implies that $A(i,j)$ is the $k$-smallest element of $A(\cdot,j)$. Furthermore, since the elements of $A(\cdot,j)$ are distinct we have \[\{\bar A(1,j),\bar A(2,j),\ldots,\bar A(n,j)\}=N.\]

We observe that replacing $A$ by the associated principal order matrix does not change the set of solution indices for \eqref{eq:problem}. Furthermore, it allows to add an additional necessary condition for such solution indices.
\begin{prop}
\label{prop:sol-idx-abar}
For a normalized \eqref{eq:problem} where $A\in \R^{m\times n}$ is a column distinct matrix, we have $\mathcal{I}(A,\omega)=\mathcal{I}(\bar{A},\omega)$ for all $\omega \in (0,1]$.
\end{prop}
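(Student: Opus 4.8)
The plan is to reduce the statement to a purely combinatorial observation: whether a tuple $(i_1,\dots,i_n)\in M^n$ belongs to $\idxset(A,\omega)$ depends on $A$ only through the relative order of the entries within each column, and, by construction, $A$ and $\bar A$ induce the same order on every column. I would therefore not use the ``single-element modification'' machinery at all, although I note below that it gives an alternative route.

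First I would unwind the definitions. If $(i_1,\dots,i_n)\in\idxset(A,\omega)$, the defining equalities $A(i_k,k)+x_k=0$ force the witness to be the specific vector $x$ with $x_k=-A(i_k,k)$, and conversely this $x$ witnesses the tuple exactly when $A\otimes_\omega x=\textbf{0}$. Writing $C(i,k):=A(i,k)-A(i_k,k)$, the $i$-th coordinate of $A\otimes_\omega x$ is the $\lceil\omega n\rceil$-th smallest entry of $C(i,\cdot)$, so $(i_1,\dots,i_n)\in\idxset(A,\omega)$ iff for every row $i$ this $p$-th smallest entry equals $0$, with $p=\lceil\omega n\rceil$. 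The next step is the elementary lemma that a finite family of reals has $0$ as its $p$-th smallest element iff it has at most $p-1$ strictly negative members and at least $p$ nonpositive members; in particular this condition depends only on the \emph{signs} of the members. Hence membership of the tuple in $\idxset(A,\omega)$ depends only on the sign matrix $\bigl(\operatorname{sign} C(i,k)\bigr)_{i,k}$.

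The crucial point is that this sign matrix is unchanged when $A$ is replaced by $\bar A$ (and $C$ by $\bar C(i,k):=\bar A(i,k)-\bar A(i_k,k)$). Indeed, by column distinctness, $A(i,k)<A(i_k,k)$ iff $A(i,k)$ has smaller rank than $A(i_k,k)$ in column $k$, i.e.\ $\bar A(i,k)<\bar A(i_k,k)$, and $A(i,k)=A(i_k,k)$ iff $i=i_k$ iff $\bar A(i,k)=\bar A(i_k,k)$; so $\operatorname{sign} C(i,k)=\operatorname{sign}\bar C(i,k)$ for all $i,k$. Since $\bar A$ is again column distinct, running the same equivalences in reverse shows $(i_1,\dots,i_n)\in\idxset(A,\omega)$ iff $(i_1,\dots,i_n)\in\idxset(\bar A,\omega)$, and taking the union over all tuples gives $\idxset(A,\omega)=\idxset(\bar A,\omega)$.

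Alternatively, and more in the spirit of Proposition~\ref{prop:modif-matrix}, one can transform $A$ into $\bar A$ by a finite sequence of single-entry changes, each preserving the order of its column, and invoke Proposition~\ref{prop:modif-matrix} at every step. I expect the only real obstacle there to be that changing one entry at a time can momentarily disturb a column's order (e.g.\ replacing the smallest entry of a column by the value $1$ may overshoot still-unmodified entries); this is fixed by a two-phase detour—first ``spread out'' each column to widely separated values, modifying entries from the largest down, then contract to the target ranks, modifying from the smallest up—both phases being order-preserving at each individual step, after which Proposition~\ref{prop:modif-matrix} applies along the whole chain. The direct sign-pattern argument above avoids this bookkeeping entirely, so I would present that as the main proof.
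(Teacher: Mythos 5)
Your main argument is correct, but it takes a genuinely different route from the paper. The paper proves this proposition by invoking Proposition~\ref{prop:modif-matrix} repeatedly: it first adds a large constant to $A$ and then lowers the entries of each column one at a time in ascending order, so that every single-entry modification preserves the column order, and concludes that $\idxset$ is invariant along the whole chain. You instead bypass that machinery with a direct characterization: a tuple $(i_1,\dots,i_n)$ lies in $\idxset(A,\omega)$ iff the forced witness $x_k=-A(i_k,k)$ solves the system, which happens iff each row of $C(i,k)=A(i,k)-A(i_k,k)$ has at most $p-1$ negative and at least $p$ nonpositive entries --- a condition depending only on $\operatorname{sign}C(i,k)$, which in turn is determined by the within-column order and hence unchanged when $A$ is replaced by $\bar A$. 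This is a clean, self-contained proof that in fact exposes the reason Proposition~\ref{prop:modif-matrix} works at all; its advantage is that it sidesteps the bookkeeping issue you correctly flag (a naive entry-by-entry replacement can transiently violate the column order), which the paper's sketch handles only implicitly via the ``add a big constant first'' device. The paper's approach buys modularity, reusing a lemma it needs anyway, at the cost of leaving the order-preservation of each intermediate step somewhat informal. Your secondary remark about the two-phase detour is essentially a more careful rendering of the paper's own construction.
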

\begin{proof}
Notice that, the order of the elements at $A(\cdot,l)$ is the same as that of $\bar{A}(\cdot,l)$ for $l\in N$. Moreover, $\bar A$ can be generated first by adding a big enough constant to $A$ and then by lowering of entries in each column in the order of their ascendance, which can be interpreted as the `composition' of modifications (at most $n^2$ times) of matrix $A$ mentioned in Proposition~\ref{prop:modif-matrix}. As a result, the set of solution indices remains the same.
\end{proof}
\begin{prop}
\label{prop:candidate-solutions}
For a normalized \eqref{eq:problem} where $A\in \R^{m\times n}$ is a column distinct matrix, if a tuple $(i_1,\ldots,i_n)\in \idxset(\bar{A})$ then the following conditions hold
\begin{itemize}
    \item[{\rm (i)}] $\{i_1,i_2,\ldots,i_n\}=M$,
    \item[{\rm (ii)}] $\displaystyle mp \leq \sum_{k\in N} \bar{A}(i_k,k)\leq mp+n-m$,
where $p=\lceil\omega n\rceil$.
\end{itemize}
\end{prop}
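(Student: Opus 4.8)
The plan is to fix a tuple $(i_1,\dots,i_n)\in\idxset(\bar A,\omega)$, pass to the fully active solution it records, and then carry out an elementary double-counting of the signs of the entries of the associated residual matrix.

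First I would unfold the definitions. Since $(i_1,\dots,i_n)\in\idxset(\bar A,\omega)$, there is some $x\in\solset(\bar A,\omega)$ with $\bar A(i_k,k)+x_k=0$ for every $k\in N$; write $r_k:=\bar A(i_k,k)\in M$, so that $x_k=-r_k$. Form the residual matrix $C$ with $C(i,k):=\bar A(i,k)+x_k=\bar A(i,k)-r_k$. Because $A$, and hence $\bar A$, is column distinct, each column $\bar A(\cdot,k)$ is a permutation of $\{1,\dots,m\}$; thus the entries of $C(\cdot,k)$ form, as a set, exactly $\{1-r_k,2-r_k,\dots,m-r_k\}$. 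In particular column $k$ of $C$ has exactly $r_k-1$ negative entries, exactly one zero entry (sitting in row $i_k$), and exactly $m-r_k$ positive entries. Summing over $k$ and writing $\Sigma:=\sum_{k\in N}\bar A(i_k,k)$, the matrix $C$ has exactly $\Sigma-n$ negative entries and exactly $mn-\Sigma$ positive entries in total.

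For (i): the identity $\bar A\otimes_\omega x=\mathbf 0$ says that for each $i\in M$ the $\ceils{\omega n}$-th smallest entry of the row $C(i,\cdot)$ equals $0$, so this row contains at least one zero. Since $C$ is column distinct, that zero occupies a column $k$ whose only zero lies in row $i_k$, forcing $i=i_k$. Hence $M\subseteq\{i_1,\dots,i_n\}$, and since the reverse inclusion is obvious we obtain $\{i_1,\dots,i_n\}=M$.

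For (ii): write $p=\ceils{\omega n}$. The condition that the $p$-th smallest entry of the $n$-entry row $C(i,\cdot)$ equals $0$ implies that this row has at most $p-1$ negative entries and at most $n-p$ positive entries. Summing the first bound over the $m$ rows gives $\Sigma-n\le m(p-1)$, i.e.\ $\Sigma\le mp+n-m$; summing the second gives $mn-\Sigma\le m(n-p)$, i.e.\ $\Sigma\ge mp$. These two inequalities together are exactly (ii). The only step that requires a moment of care is the sign bookkeeping — that a column whose zero has rank $r_k$ contributes exactly $r_k-1$ negatives and $m-r_k$ positives (a consequence of the column distinctness of $\bar A$), together with the elementary fact that an $n$-tuple whose $p$-th order statistic is $0$ has at most $p-1$ entries below $0$ and at most $n-p$ above $0$. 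Beyond these observations the argument is pure counting, so I do not expect a genuine obstacle here.
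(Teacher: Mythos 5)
Your proof is correct and follows essentially the same route as the paper's: both form the residual matrix $C(i,k)=\bar A(i,k)+x_k$, use column distinctness to count exactly $\bar A(i_k,k)-1$ negatives and $m-\bar A(i_k,k)$ positives per column, and compare against the per-row bounds of at most $p-1$ negatives and $n-p$ positives forced by the $p$-th order statistic being zero. The derivation of (i) from the presence of a zero in each row likewise matches the paper's argument.
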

\begin{proof} Suppose $(i_1,\ldots,i_n)\in \idxset(\bar{A})$ and $x=[x_1~\cdots~x_n]^\top\in \solset(\bar{A})$ is the corresponding fully active solution such that $x_{k}=-\bar{A}(i_k,k)$ for $k\in N$.  (i) The first condition follows from the fact that, for each $i\in M$ there must be at least one active elements w.r.t $x$ and $\omega$ in $\bar{A}(i,\cdot)$.\\
(ii) Let us define a matrix $C$ where $C(i,j)=\bar A(i,j)+x_j$ for $i\in M,j\in N$. Notice that, there are at most $p-1$ negative elements in each row of $C$. Furthermore, there are exactly $-x_{k}-1=\bar A(i_k,k)-1$ negative elements in the $k$-th column of $C$. Hence, we obtain
\[
\sum_{k\in N} (\bar{A}(i_k,k)-1)\leq m(p-1).
\]

On the other hand, there are at most $n-p$ positive element in each row $C$ and exactly $m+x_k=m-\bar{A}(i_k,k)$ positive elements in the $k$-column of $C$. These conditions implies that 
\[
\sum_{k\in N} (m-\bar{A}(i_k,k))\leq m(n-p).
\]
By simple algebraic manipulations with the above inequalities, one can obtain the desired lower and upper bounds for $\sum_{k\in N} \bar{A}(i_k,k)$. 



\end{proof}

Proposition~\ref{prop:candidate-solutions} is important since it gives a number of candidates for solution indices. We further provide the conditions for such indices when $m>n$ and $m=n$.

\begin{cor}
\label{cor:m>n-cases}
    Suppose we have a normalized \eqref{eq:problem} where $A\in \R^{m\times n}$ is a column distinct matrix. If $m>n$ then $\idxset=\solset=\emptyset$.
\end{cor}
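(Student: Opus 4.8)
The plan is to rule out solutions directly by a double-counting argument on the zero entries of an auxiliary matrix, and then to read off the statement about $\idxset$ from the definition \eqref{eq:solution-index}. First I would suppose, for contradiction, that $\solset(A,\omega)\neq\emptyset$ and fix some $x\in\solset(A,\omega)$. Following the device already used in the proofs of Propositions~\ref{prop:modif-matrix} and~\ref{prop:candidate-solutions}, I would form the matrix $C$ with $C(i,j)=A(i,j)+x_j$, so that the active elements of $A$ with respect to $x$ and $\omega$ are precisely the zero entries of $C$.

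Next I would count the zeros of $C$ in two ways. On one hand, since $x\in\solset(A,\omega)$, for every $i\in M$ there is at least one $j\in N$ with $A(i,j)+x_j=0$; hence each of the $m$ rows of $C$ contains at least one zero, so $C$ has at least $m$ zero entries. On the other hand, since $A$ is column distinct, the entries of each column $A(\cdot,j)$ are pairwise distinct, and therefore so are the entries of $C(\cdot,j)=A(\cdot,j)+x_j$; consequently each of the $n$ columns of $C$ contains at most one zero, so $C$ has at most $n$ zero entries. Combining the two bounds yields $m\leq n$, contradicting the hypothesis $m>n$. Hence $\solset(A,\omega)=\emptyset$, and then by \eqref{eq:solution-index} the set $\idxset(A,\omega)=\bigcup_{x\in\solset(A,\omega)}\idxset(A,\omega,x)$ is a union over the empty index set, so $\idxset(A,\omega)=\emptyset$ as well.

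An alternative route, which I would also mention, is to invoke Proposition~\ref{prop:candidate-solutions}(i): any $(i_1,\ldots,i_n)\in\idxset(\bar A,\omega)$ would have to satisfy $\{i_1,\ldots,i_n\}=M$, which is impossible for a length-$n$ tuple when $|M|=m>n$; hence $\idxset(\bar A,\omega)=\emptyset$, and Proposition~\ref{prop:sol-idx-abar} gives $\idxset(A,\omega)=\emptyset$. One then concludes $\solset(A,\omega)=\emptyset$ from the observation (recorded after \eqref{eq:solution-index}) that $\solset(A,\omega)\neq\emptyset$ implies $\idxset(A,\omega)\neq\emptyset$. There is essentially no genuine obstacle here; the only point worth flagging is that the argument uses column distinctness in an essential way (to bound the number of zeros per column) but never uses the value of $\omega$, which is exactly why the conclusion holds uniformly for every $\omega\in(0,1]$.
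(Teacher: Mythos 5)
Your proof is correct, but it takes a different route from the paper. The paper's own argument is a one-line appeal to Proposition~\ref{prop:candidate-solutions}(ii): any $(i_1,\ldots,i_n)\in\idxset(\bar A)$ would have to satisfy $mp\leq\sum_{k\in N}\bar A(i_k,k)\leq mp+n-m$, and when $m>n$ the upper bound falls below the lower bound, so no such tuple exists. Your primary argument instead counts zeros of $C(i,j)=A(i,j)+x_j$ directly: at least one per row because $x$ is a solution, at most one per column by column distinctness, hence $m\leq n$. This is more elementary and more self-contained --- it bypasses the principal order matrix entirely and, importantly, it rules out \emph{all} solutions rather than only the fully active ones, so you get $\solset=\emptyset$ without leaning on the (unproved in the paper) assertion that $\solset\neq\emptyset$ implies $\idxset\neq\emptyset$. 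Your secondary route via Proposition~\ref{prop:candidate-solutions}(i) (a length-$n$ tuple cannot have $\{i_1,\ldots,i_n\}=M$ when $m>n$) is also valid and is yet again different from the paper's use of part (ii); note that this route, like the paper's, does need the implication from empty $\idxset$ to empty $\solset$, which you correctly flag as a separately recorded observation. Both of your arguments correctly use column distinctness and are uniform in $\omega$, matching the claim.
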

\begin{proof}
    The non-solvable condition when $m>n$ is due to the fact the lower and upper bounds for $ \sum_{k\in N} \bar{A}(i_k,k)$ given by Proposition~\ref{prop:candidate-solutions} are not consistent.
\end{proof}

\begin{cor}
\label{cor:m=n-cases}
    Suppose we have a normalized \eqref{eq:problem} where $A\in \R^{m\times n}$ is a column distinct matrix. If $m=n$ then the following conditions hold 
    \begin{itemize}
        \item[{\rm (i)}] if $(i_1,\ldots,i_n)\in \idxset$ then $ \sum_{k\in N} \bar{A}(i_k,k)=\lceil\omega n\rceil n$,
        \item[{\rm (ii)}] each $(i_1,\ldots,i_n)\in \idxset$ is a permutation of $N$,
        \item [{\rm (iii)}] all solutions ${x}\in \solset$ are fully active,
        \item[{\rm (iv)}] $|\solset|=|\idxset|$ for each $\omega\in(0,1]$.
    \end{itemize}
\end{cor}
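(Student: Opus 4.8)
The plan is to read parts (i) and (ii) directly off Proposition~\ref{prop:candidate-solutions} specialized to $m=n$, to establish (iii) by a short counting argument on the zeros of the shifted matrix, and to obtain (iv) as a bookkeeping consequence of (iii) together with Proposition~\ref{prop:fully active-idx}.

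First I would handle (i) and (ii). By Proposition~\ref{prop:sol-idx-abar} we may work with $\bar A$, i.e.\ $\idxset=\idxset(\bar A)$. If $(i_1,\dots,i_n)\in\idxset(\bar A)$, the two-sided bound of Proposition~\ref{prop:candidate-solutions}(ii) reads $np\le \sum_{k\in N}\bar A(i_k,k)\le np+n-n=np$, forcing $\sum_{k\in N}\bar A(i_k,k)=pn=\ceils{\omega n}\,n$, which is (i). For (ii), Proposition~\ref{prop:candidate-solutions}(i) gives $\{i_1,\dots,i_n\}=M=N$; since the tuple has $n$ entries and $|N|=n$, no entry can repeat, so $(i_1,\dots,i_n)$ is a permutation of $N$. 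Both are essentially immediate once those propositions are in place.

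Next, for (iii), I would fix $x\in\solset$ and put $C(i,j)=A(i,j)+x_j$. Since the system is normalized, $\bomega C(i,\cdot)=0$ for every row $i$, and as $\bomega C(i,\cdot)$ is by definition one of the entries of that row, every row of $C$ contains at least one zero; hence $C$ has at least $m=n$ zero entries, lying in $n$ distinct rows. On the other hand, column distinctness of $A$ forces each column of $C$ to contain at most one zero (two zeros in column $j$ would give $A(i,j)=A(i',j)=-x_j$ with $i\neq i'$), so $C$ has at most $n$ zeros. Therefore $C$ has exactly $n$ zeros, exactly one in each column, which is precisely the assertion that $x$ is fully active. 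I would add that this counting also explains Corollary~\ref{cor:m>n-cases}: for $m>n$ the forced lower count $m$ and the upper count $n$ are incompatible.

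Finally, for (iv): the text already records $|\idxset|\le|\solset|$, so it suffices to exhibit an injection $\solset\to\idxset$. By (iii) every $x\in\solset$ is fully active, and Proposition~\ref{prop:fully active-idx} then says $\idxset(A,\omega,x)$ is a singleton; letting $\iota(x)$ be its unique element gives a well-defined map into $\idxset$. It is injective because from $\iota(x)=(i_1,\dots,i_n)$ one recovers $x$ coordinatewise via $x_k=-A(i_k,k)$. Hence $|\solset|\le|\idxset|$ and equality follows. I do not anticipate a real obstacle; the only points needing care are verifying that the map in (iv) is both well defined (this is exactly where full activeness and column distinctness feed into Proposition~\ref{prop:fully active-idx}) and injective, and checking that the inequalities in the counting for (iii) point the right way.
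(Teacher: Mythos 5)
Your proof is correct and follows essentially the same route as the paper's: (i) and (ii) are read off Propositions~\ref{prop:sol-idx-abar} and~\ref{prop:candidate-solutions}, (iii) is the same counting of zeros of $C$ (at least one per row, at most one per column by column distinctness, hence exactly one per column), and (iv) combines (iii) with Proposition~\ref{prop:fully active-idx} to get a bijection between $\solset$ and $\idxset$. The only cosmetic difference is in (ii), where you use Proposition~\ref{prop:candidate-solutions}(i) together with a pigeonhole count while the paper argues that each row contains exactly one active element; both arguments are equivalent here.
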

\begin{proof} 
\begin{itemize}
    \item[(i)] Direct result of Proposition~\ref{prop:candidate-solutions} when $m=n$.
    \item[(ii)] Suppose that $(i_1,\ldots,i_n)\in \idxset$ and $x\in \solset$ such that $x_k=-A(i_k,k)$ for each $k\in N$. Since $A$ is column distinct and $m=n$, there is exactly one active element w.r.t. $x$ in each row of $A$. Consequently, $i_1,\ldots,i_n$ must be distinct.
    \item[(iii)] Suppose that $x\in \solset$. Since $A$ is column distinct, in each column of $A$ there is at most one active element w.r.t. $x$. The condition $m=n$ enforces that all columns of $A$ to contain one active element.
    \item[(iv)] By part (ii) and (iii), for each $x\in \solset$ we have $|\mathcal{I}(A,\omega,x)|=1$. This condition implies that there is one-to-one correspondence between $\solset$ and $\idxset$. 
\end{itemize}
\end{proof}

As we are going to see in examples (for square and non-square cases) below, it often happens that \eqref{eq:problem} has several solutions stemming from these ``candidates''. 
\begin{exmp}
    \label{ex:distinct-3x4}
    Suppose we have a normalized linear equation \eqref{eq:problem} where
\[
A=\begin{bmatrix}
    5&5&-2&3\\
    2&4&6&1\\
    6&-1&7&2
\end{bmatrix}
~\text{and}~\omega\in\left\{\frac{1}{4},\frac{1}{2},\frac{3}{4},1\right\}.
\]
We will find the solutions of \eqref{eq:problem} for each $\omega$. The principal order matrix is
\[
\bar{A}=\begin{bmatrix}
2&3&1&3\\
1&2&2&1\\
3&1&3&2
\end{bmatrix}.
\]
By Proposition~\ref{prop:candidate-solutions}, if $(i_1,i_2,i_3,i_4)\in \idxset(\bar{A})$ then $\{i_1,i_2,i_3,i_4\}=\{1,2,3\}$ and 
\begin{equation}
\label{e:omega12}
12\omega\leq \bar{A}(i_1,1)+\ldots+ \bar{A}(i_4,4)\leq 12\omega+1.
\end{equation}
\begin{itemize}
    \item[(i)] For $\omega=\frac{1}{4}$, the only tuple satisfying~\eqref{e:omega12} is $(2,3,1,2)$ which corresponds to a vector $\bar{x}=[-1~-1~-1~-1]^\top$. One could check that $\bar{A}\otimes \bar{x}=\emph{\textbf{0}}$. Consequently, we have $\idxset(A,\frac{1}{4})=\idxset(\bar{A},\frac{1}{4})=\{(2,3,1,2)\}$ 
    which implies that $\begin{bmatrix}
    -2&1&2&-1
    \end{bmatrix}^\top \in \solset(A,\frac{1}{4})$.
     \item[(ii)] For $\omega=\frac{1}{2}$, there are 9 tuples satisfying~\eqref{e:omega12}, namely, 
     \[
     \begin{array}{l}
        (1,2,1,3),(1,3,2,2),(1,3,2,3),(2,1,1,3),(2,2,1,3),\\
        (2,3,1,1),(2,3,2,1),(3,2,1,2),(3,3,1,2),
     \end{array}
     \]
      which respectively corresponds a vector
      \[
      \bar{x}\in \left\{
      \begin{bmatrix}
          -2\\-2\\-1\\-2
      \end{bmatrix}\!\!,
      \begin{bmatrix}
          -2\\-1\\-2\\-1
      \end{bmatrix}\!\!,
      \begin{bmatrix}
          -2\\-1\\-2\\-2
      \end{bmatrix}\!\!,
      \begin{bmatrix}
          -1\\-3\\-1\\-2
      \end{bmatrix}\!\!,
      \begin{bmatrix}
          -1\\-2\\-1\\-2
      \end{bmatrix}\!\!,
      \begin{bmatrix}
          -1\\-1\\-1\\-3
      \end{bmatrix}\!\!,
      \begin{bmatrix}
          -1\\-1\\-2\\-3
      \end{bmatrix}\!\!,
      \begin{bmatrix}
          -3\\-2\\-1\\-1
      \end{bmatrix}\!\!,
            \begin{bmatrix}
          -3\\-1\\-1\\-1
      \end{bmatrix}
      \right\}\!.
      \]
      One could check that only the last four vectors satisfy $\bar{A}\otimes_{\frac{1}{2}} \bar{x}=\emph{\textbf{0}}$. Hence, from the same tuples, one can obtain the set of fully active solutions $x\in \solset(A,\frac{1}{2})$, namely,
      \[
    \begin{bmatrix}
        -2\\1\\2\\-3
    \end{bmatrix},
    \begin{bmatrix}
        -2\\1\\-6\\-3
    \end{bmatrix},
    \begin{bmatrix}
        -6\\-4\\2\\-1
    \end{bmatrix},
    \begin{bmatrix}
        -6\\1\\2\\-1
    \end{bmatrix}
    \!\!.
      \]
        \item[(iii)] For $\omega=\frac{3}{4}$, there are 12 tuples satisfying~\eqref{e:omega12}, namely 
     \[
     \begin{array}{l}
         (1,1,2,3),(1,1,3,2),(1,2,3,1),(1,2,3,3),(2,1,3,1),(2,2,3,1)\\
         (3,1,2,2),(3,1,2,3),(3,1,3,2),(3,2,1,1),(3,2,2,1),(3,3,2,1).\\
     \end{array}
     \]
     However, only the last three vectors which belongs to $\idxset(\bar A,\frac{3}{4})$. From the same tuples, one can obtain the set of fully active solutions $x\in \solset(A,\frac{3}{4})$, namely
     \[
     \begin{bmatrix}
         -6\\-4\\2\\-3
     \end{bmatrix},
     \begin{bmatrix}
         -6\\-4\\-6\\-3
     \end{bmatrix},
     \begin{bmatrix}
         -6\\1\\-6\\-3
     \end{bmatrix}.
     \]

      \item[(iv)] For $\omega=1$, there is no tuple that satisfies~\eqref{e:omega12}. Hence, $\idxset(\bar{A},1)=\idxset(A,1)=\solset(A,1)=\emptyset$.
\end{itemize}
\end{exmp}
\begin{exmp}
\label{ex:distinct-3x3}
Suppose we have a normalized linear equation \eqref{eq:problem} where
\[
A=\begin{bmatrix}
    4&7&2\\
    5&2&5\\
    8&3&1
\end{bmatrix}~\text{and}~\omega\in\left\{\frac{1}{3},\frac{2}{3},1\right\}.
\]
We will find the solutions of \eqref{eq:problem} for each $\omega$. The principal order matrix is
\[
\bar{A}=\begin{bmatrix}
1&3&2\\
2&1&3\\
3&2&1
\end{bmatrix}.
\]
\begin{itemize}
    \item[(i)] For $\omega=\frac{1}{3}$, the only candidate for a solution index is $(1,2,3)$ which corresponds to a vector $\bar{x}=\begin{bmatrix}-1&-1&-1\end{bmatrix}^\top$. One could check that $\bar{A}\otimes_{\frac{1}{3}} \bar{x}=\emph{\textbf{0}}$. Consequently, we have $\idxset(A,\frac{1}{3})=\idxset(\bar{A},\frac{1}{3})=\{(1,2,3)\}$ which implies $\solset(A,\frac{1}{3})=\{\begin{bmatrix}
    -4&-2&-1
    \end{bmatrix}^\top\} $.
    \item[(ii)] For $\omega=1$, the only candidate for a solution index is $(3,1,2)$ which corresponds to a vector $\bar{x}=\begin{bmatrix}-3&-3&-3\end{bmatrix}^\top$. One could check that $\bar{A}\otimes \bar{x}=\emph{\textbf{0}}$. Consequently, we have $\idxset(A,1)=\idxset(\bar{A},1)=\{(3,1,2)\}$ which implies $\solset(A,1)=\{\begin{bmatrix}
    -8&-7&-5
    \end{bmatrix}^\top\} $.
    \item[(iii)] For $\omega=\frac{2}{3}$, we have four candidates of solution indices, i.e., $(1,3,2)$, $(2,1,3),(2,3,1),(3,2,1)$. They correspond to vectors
    \[
    \bar{x} \in\left\{
    \begin{bmatrix}
    -1\\-2\\-3
    \end{bmatrix},
    \begin{bmatrix}
    -2\\-2\\-2
    \end{bmatrix},
    \begin{bmatrix}
    -2\\-3\\-1
    \end{bmatrix},
    \begin{bmatrix}
    -3\\-1\\-2
    \end{bmatrix}
    \right\}.
    \]
    One can check that all above vectors satisfy $\bar A\otimes_{\frac{2}{3}}\bar{x}=\emph{\textbf{0}}$. Hence, we have $\idxset(A,\frac{2}{3})=\idxset(\bar{A},\frac{2}{3})=\{(1,3,2), (2,1,3),(3,1,2),(3,2,1)\}$ and
    \begin{center}
    $\solset(A,\frac{2}{3})=\left\{
    \begin{bmatrix}
    -4\\-3\\-5
    \end{bmatrix},
    \begin{bmatrix}
    -5\\-3\\-2
    \end{bmatrix},
    \begin{bmatrix}
    -5\\-7\\-1
    \end{bmatrix},
    \begin{bmatrix}
    -8\\-2\\-2
    \end{bmatrix}\right
    \}.$    
    \end{center}
    
\end{itemize}
The linear equation $A\otimes_{\omega}x=\emph{\textbf{0}}$ is thus solvable for all possible $\omega$. Note that for $\omega=\frac{2}{3}$, the number of solutions is more than 1 but finite.
\end{exmp}
\begin{rem}
{\rm 
The previous example shows that $A\otimes_{\omega}x={\textbf{0}}$
can have a finite number of different solutions. This shows that for general $\omega$ the solution set to this equation can be disconnected in the topological sense. This is in contrast with solution sets of $A\otimes x=b$ in max-plus or min-plus algebras, which are not only connected but also tropically convex (with respect to the convexities induced by max-plus and min-plus segments, respectively).
}
\end{rem}

\begin{cor}
\label{cor:solution}
Consider a normalized system \eqref{eq:problem} where $A\in \R^{n\times n}$ is a column distinct matrix. If $\ceils{\omega n}\in \{1,n\}$, then $|\solset|\in \{0,1\}$. Moreover, if $n=3$ and $|\solset|=1$ for $\ceils{\omega n}\in \{1,3\}$, then $|\solset|=4$ for $\ceils{\omega n}=2$.
\end{cor}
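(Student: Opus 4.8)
The plan is to route everything through the principal order matrix $\bar A$ and then, for the $n=3$ part, down to a single canonical Latin square. By Proposition~\ref{prop:sol-idx-abar} together with Corollary~\ref{cor:m=n-cases}(iv), for a column distinct $A\in\R^{n\times n}$ we have $|\solset(A,\omega)|=|\idxset(A,\omega)|=|\idxset(\bar A,\omega)|$, so it is enough to argue about $\bar A$, each of whose columns is a permutation of $N$. For the first assertion, let $p:=\ceils{\omega n}\in\{1,n\}$. By Corollary~\ref{cor:m=n-cases}(i), every $(i_1,\dots,i_n)\in\idxset$ satisfies $\sum_k\bar A(i_k,k)=pn$, which is $n$ when $p=1$ and $n^2$ when $p=n$; since each $\bar A(i_k,k)\in\{1,\dots,n\}$, this forces $\bar A(i_k,k)=1$ for all $k$ (resp.\ $\bar A(i_k,k)=n$ for all $k$), and column distinctness leaves exactly one such tuple. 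Hence $|\idxset|\le 1$, so $|\solset|\in\{0,1\}$.

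Now take $n=3$ and assume $|\solset|=1$ both for $p=1$ and for $p=3$. I would reuse the analysis above: for $p=1$ the unique candidate tuple records, for each column $k$, the row of $\bar A$ containing the entry $1$ of column $k$; by Corollary~\ref{cor:m=n-cases}(ii) this tuple must be a permutation of $\{1,2,3\}$, and that happens precisely when the three $1$'s of $\bar A$ lie in three distinct rows, i.e.\ every row of $\bar A$ contains a $1$. (The converse is easy — if every row has a $1$ then $\bar x=(-1,-1,-1)$ solves the min-plus system — but only this direction is needed.) Symmetrically, $|\solset|=1$ for $p=3$ forces every row of $\bar A$ to contain a $3$, hence (rows having length $3$) also a $2$. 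Since the columns of $\bar A$ are already permutations of $\{1,2,3\}$, this says exactly that $\bar A$ is a $3\times 3$ Latin square on $\{1,2,3\}$.

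It then remains to show $|\solset|=4$ for $p=\ceils{\omega n}=2$ whenever $\bar A$ is a $3\times3$ Latin square. The key point is that for such $\bar A$ the necessary conditions of Corollary~\ref{cor:m=n-cases} are also sufficient: if $(i_1,i_2,i_3)$ is a permutation of $\{1,2,3\}$ with $\sum_k\bar A(i_k,k)=6$, set $\bar x_k=-\bar A(i_k,k)$ and $C(i,k)=\bar A(i,k)+\bar x_k$. Then $C(i_k,k)=0$ and, since $(i_1,i_2,i_3)$ is a permutation, each row of $C$ has exactly one zero entry; moreover, for every row $i$,
\[
\textstyle\sum_k C(i,k)=\sum_k\bar A(i,k)+\sum_k\bar x_k=6-6=0,
\]
because row $i$ of the Latin square is a permutation of $\{1,2,3\}$ and $\sum_k\bar x_k=-\sum_k\bar A(i_k,k)=-6$. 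Thus each row of $C$ has the form $\{0,a,-a\}$, whose second-smallest element (the value of $\bomega$ when $p=2$, $n=3$) is $0$, so $\bar A\otimes_\omega\bar x=\textbf{0}$ and $(i_1,i_2,i_3)\in\idxset$. Hence $|\solset|=|\idxset|$ equals the number of permutations $(i_1,i_2,i_3)$ of $\{1,2,3\}$ with $\sum_k\bar A(i_k,k)=6$; permuting the rows and columns of $\bar A$ changes neither this count nor $|\solset|$ (rows permute the equations, columns the variables), so one may take $\bar A=\bigl[\begin{smallmatrix}1&2&3\\2&3&1\\3&1&2\end{smallmatrix}\bigr]$, for which a direct enumeration of the six permutations gives exactly four with column-rank-sum $6$. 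Therefore $|\solset|=4$, in agreement with Example~\ref{ex:distinct-3x3}.

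I expect the main obstacle to be the middle step: converting the two ``unique solution'' hypotheses into the clean combinatorial statement that $\bar A$ is a Latin square. This requires applying the $m=n$ uniqueness criterion in both the min-plus and max-plus directions and carefully tracking what each forces about the positions of the smallest and largest entries in every column. Once that structural fact is secured, the reduction to a canonical Latin square and the closing count are routine, and the sufficiency of the candidate conditions for $3\times3$ Latin squares (the short row-sum computation above) is the computational heart of the final step.
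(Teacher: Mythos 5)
Your proof is correct and follows essentially the same route as the paper's: reduce to the principal order matrix via Proposition~\ref{prop:sol-idx-abar} and Corollary~\ref{cor:m=n-cases}(iv), use the sum condition to pin down the unique candidate tuple when $\ceils{\omega n}\in\{1,n\}$, observe that the two uniqueness hypotheses force every row of $\bar A$ to contain a $1$ and a $3$ (hence, by counting the three occurrences of each symbol across the three rows, exactly one $2$ as well, so $\bar A$ is a Latin square), and then count after normalizing by row and column permutations. You in fact supply more detail than the paper, which merely asserts the row-distinctness and points to Example~\ref{ex:distinct-3x3} with ``one can check'': your explicit sufficiency argument --- that for a $3\times 3$ Latin square every permutation with column-rank-sum $6$ produces rows of $C$ of the form $\{0,a,-a\}$ whose median is $0$ --- is a welcome closing of that gap.
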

\begin{proof}
    By Proposition~\ref{prop:candidate-solutions},
    if $(i_1,\ldots,i_n)\in \idxset(\bar{A})$ then $\bar A(i_1,1)+\ldots+\bar A(i_n,n)=\ceils{\omega n}n$ .
    For $\ceils{\omega n}\in \{1,n\}$, there is only one possibility for such tuple i.e., when $\bar A(i_k,k)=\ceils{\omega n}$ for $k\in N$. Furthermore, Corollary~\ref{cor:m=n-cases}(ii) asserts that $|\idxset(\bar{A})|=1$ if and only if $(i_1,\ldots,i_n)$ is a permutation of $N$.
    Hence, $|\idxset(\bar{A})|\in\{0,1\}$. Finally, Proposition~\ref{prop:sol-idx-abar} and Corollary~\ref{cor:m=n-cases}(iv) imply that $|\idxset(\bar{A})|=|\idxset|=|\solset|$, which completes the proof.

    Notice that, for $n=3$ and when $|\solset|=1$ for $\ceils{3\omega}\in \{1,3\}$, the elements at each row of $\bar{A}$ are distinct. 
    Corollary~\ref{ex:distinct-3x3} demonstrates that, for such $\bar{A}$, $|\idxset(\bar{A})|=|\idxset|=|\solset|=4$ when $\ceils{3\omega}=2$. Notice that, if one permute the rows or columns of $\bar{A}$, the number of solutions for each $\omega$ remains the same. 
    
    
\end{proof}

\section{General Case}
\label{sec:general-case}
This section presents the approach to solve \eqref{eq:problem} when $A$ is not necessarily column distinct i.e., there are may be duplicates on several columns of $A$. 
Inspired by the method described in Section \ref{sec:colum-distinct}, we develop a technique to solve \eqref{eq:problem} by first finding a tuple $(i_1,\ldots,i_n)$ which belongs to $\idxset$. 

 To allow for the the possibility of equal entries in the columns, the generation of the principal order matrix $\bar{A}$ has to be slightly different. Suppose now $\texttt{col}_j$ is the set of elements at the $j$-th column of $A$ after removing the duplicates\footnote{For instance, if $A(\cdot,j)=[4~5~5]^\top$ then $\texttt{col}_j=\{4,5\}$.}. Furthermore, for a matrix $A$ and $r\in \mathbb{R}$, we define $\texttt{idx}_j(A,r)$ as the set of indices where $r$ appears at $A(\cdot,j)$. In other words,
\begin{equation}
    \texttt{idx}_j(A,r)=\{i\in M\mid A(i,j)=r\}.
\end{equation}
The principal order matrix from a non-column distinct matrix $\bar{A}$ is generated as follows
\begin{equation}
\label{principal-indistinct}
    \bar{A}(i,j) = 1+ \sum_{l=1}^{k-1} |\texttt{idx}_j(A,\bigoplus\hspace*{-0.5ex} {}_{\frac{l}{|\texttt{col}_j|}} \texttt{col}_j)|~\text{if}~\bigoplus\hspace*{-0.5ex} {}_{\frac{k}{|\texttt{col}_j|}} \texttt{col}_j=A(i,j),
\end{equation}
for $1\leq k\leq |\texttt{col}_j|,i\in M$ and $j\in N$. Intuitively, if $A(i,j)$ is the $k$-th smallest element at $\texttt{col}_j$, then $\bar{A}(i,j)-1$ corresponds to the sum of the number of appearances for other elements in $\texttt{col}_j$ which are smaller than $A(i,j)$. Notice that, since $A$ is not column distinct in general, neither is $\bar{A}$. Suppose now $f_j$ denotes the largest number of appearances of elements at $\bar{A}(\cdot,j) $; that is
\begin{equation}
    \label{eq:largest-freq}
    f_j=\max\{|\texttt{idx}_j(\bar{A},1)|,|\texttt{idx}_j(\bar{A},2)|,\ldots, |\texttt{idx}_j(\bar{A},m)|\}.
\end{equation}
It is evident that $f_1+\ldots+f_n\geq n$. The following proposition presents a necessary condition for a tuple $(i_1,\ldots,i_n)$ to belong to $\idxset(\bar A)$.

\begin{prop}
\label{prop:candidate-solutions-indistinct}
    For a normalized system~\eqref{eq:problem} with $A\in \R^{m\times n}$, if  $(i_1,\ldots,i_n)\in \idxset(\bar{A})$ then the following conditions hold:
    \begin{itemize}
    \item[{\rm (i)}] $\emph{\texttt{idx}}_1(\bar{A},\bar{A}(i_1,1))\cup \ldots \cup \emph{\texttt{idx}}_n(\bar{A},\bar{A}(i_n,n))=M$,
    \item[{\rm (ii)}] $mp + n - f \leq \bar{A}(i_1,1)+\ldots+\bar{A}(i_n,n)\leq  mp+n-m$,
where $p=\ceils{\omega n}$ and $ f= f_1+\ldots+ f_n$.
\end{itemize}    
\end{prop}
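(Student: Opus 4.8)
The plan is to imitate the proof of Proposition~\ref{prop:candidate-solutions}, adjusting the counting so that the multiplicities occurring in the columns of $\bar A$ are accounted for. Fix a tuple $(i_1,\ldots,i_n)\in\idxset(\bar A)$ and let $x=[x_1~\cdots~x_n]^\top\in\solset(\bar A,\omega)$ be a corresponding fully active solution, so $x_k=-\bar A(i_k,k)$ for every $k\in N$. As in the column distinct case, form the matrix $C$ with $C(i,j)=\bar A(i,j)+x_j=\bar A(i,j)-\bar A(i_j,j)$; its zero entries are exactly the active elements of $\bar A$ with respect to $x$ and $\omega$. Since $\bomega C(i,\cdot)=0$ for every $i\in M$ (i.e., the $p$-th smallest entry of the $i$-th row equals $0$, where $p=\ceils{\omega n}$), each row of $C$ has at least one zero, at most $p-1$ strictly negative entries, and at most $n-p$ strictly positive entries. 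Part~(i) follows at once: given $i\in M$, pick $k\in N$ with $C(i,k)=0$; then $\bar A(i,k)=\bar A(i_k,k)$, i.e.\ $i\in\texttt{idx}_k(\bar A,\bar A(i_k,k))$, which proves $M\subseteq\bigcup_{k\in N}\texttt{idx}_k(\bar A,\bar A(i_k,k))$, the reverse inclusion being trivial.

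For part~(ii) I would count the sign pattern of $C$ column by column and confront it with the row bounds just obtained. By the construction~\eqref{principal-indistinct} of $\bar A$, the quantity $\bar A(i_k,k)-1$ is precisely the number of entries of $\bar A(\cdot,k)$ strictly below $\bar A(i_k,k)$, so column $k$ of $C$ has exactly $\bar A(i_k,k)-1$ negative entries; summing over $k$ and using that each of the $m$ rows has at most $p-1$ negatives gives $\sum_{k\in N}(\bar A(i_k,k)-1)\le m(p-1)$, which rearranges to the upper bound $\sum_{k\in N}\bar A(i_k,k)\le mp+n-m$. Dually, writing $\mu_k:=|\texttt{idx}_k(\bar A,\bar A(i_k,k))|$ for the multiplicity of $\bar A(i_k,k)$ in column $k$, column $k$ of $C$ has exactly $m-(\bar A(i_k,k)-1)-\mu_k$ positive entries; summing and using that each row has at most $n-p$ positives gives $\sum_{k\in N}\bigl(m-\bar A(i_k,k)+1-\mu_k\bigr)\le m(n-p)$, i.e.\ $\sum_{k\in N}\bar A(i_k,k)\ge mp+n-\sum_{k\in N}\mu_k$. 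Since $\mu_k\le f_k$ by the definition~\eqref{eq:largest-freq} of $f_k$, we get $\sum_{k\in N}\mu_k\le f$, hence the lower bound $\sum_{k\in N}\bar A(i_k,k)\ge mp+n-f$.

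I do not expect a genuine obstacle; the one step that needs care is the positive-entry count, where the presence of duplicates forces the correction term $\mu_k$, and it is exactly this term --- bounded above by $f_k$ --- that makes the lower bound $mp+n-f$ rather than the $mp$ of Proposition~\ref{prop:candidate-solutions}. As a sanity check, when $A$ is column distinct the entries of each column of $\bar A$ are precisely $1,\ldots,m$, so $f_k=1$, $f=n$, and the lower bound collapses to $mp$, recovering Proposition~\ref{prop:candidate-solutions}(ii).
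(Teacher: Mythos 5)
Your proof is correct and follows essentially the same route as the paper's: the same matrix $C$, the same row bounds of at most $p-1$ negatives and $n-p$ positives, the same exact column counts ($\bar A(i_k,k)-1$ negatives and $m-\bar A(i_k,k)+1-\mu_k$ positives, with $\mu_k$ playing the role of the paper's $g_k$), and the same final estimate $\mu_k\le f_k$. The only addition is your explicit argument for part (i) and the sanity check against the column distinct case, both of which are consistent with the paper.
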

\begin{proof} (i) The proof for the first part can be done similarly as in Proposition~\ref{prop:candidate-solutions}.\\
    (ii) Suppose that $(i_1,\ldots,i_n)\in \idxset(\bar{A})$ and  $x=[x_1~\cdots~x_n]^\top\in \solset(\bar{A})$ such that $x_{k}=-\bar{A}(i_k,k)$ for $k\in N$. Let us define a matrix $C$ where $C(i,j)=\bar A(i,j)+x_j$ for $i\in M$ and $j\in N$. Notice that, the number of zeros at $C$ is equal to ${d}_{1}(\bar A,\bar A(i_1,1))+\ldots+{d}_{n}(\bar A,\bar A(i_n,n))$. Hence, several columns and rows of $C$ may have multiple zeros. Since $\bigoplus_{\omega} C(i,\cdot)=0$ for all $i\in M$, there are at most $p-1$ negative elements and at most $n-p$ positive elements at $C(i,\cdot)$. In total, there are at most $m(p-1)$ negative elements and $m(n-p)$ positive elements at $C$.

    On the other hand, at the $k$-column of $C$, there are exactly $-x_{k}-1=\bar A(i_k,k)-1$ negative elements. Consequently, 
\[
\sum_{k\in N} \bar{A}(i_k,k)-n\leq m(p-1)
\]
which yields $\sum_{k\in N} \bar{A}(i_k,k)\leq mp+n-m$. Similarly, at the $k$-column of $C$, there are exactly \[ m-\bar A(i_k,k)- g_k +1\] positive elements where $g_k$ is the number of appearance of $\bar{A}(i_k,k)$ at $\bar{A}(\cdot,k)$ i.e., $g_k=|\texttt{idx}_k(\bar{A},\bar{A}(i_k,k))|$. Hence, 
\[
\sum_{k\in N} (m-\bar A(i_k,k)-g_k+1)\leq m(n-p)
\]
which yields $\sum_{k\in N} \bar{A}(k,j_k)\geq mp+n-g_k$. Furthermore, since $ g_k\leq f_{k}$ for $k\in N$, we have 
\[\sum_{k\in N} \bar{A}(k,j_k)\geq mp+n-\sum_{k\in N} f_{k}.\]
This completes the proof.
\end{proof}

Unlike the column distinct cases, the condition that the number of rows is greater than that of columns does not necessarily imply that the problem \eqref{eq:problem} is not solvable. Instead, the magnitude of the sum of $f_j$ defined in \eqref{eq:largest-freq} may lead to the unsolvability. Furthermore, unlike in column distinct case, $1\leq |\solset|<+\infty$ may happen even when the matrix is not square.
\begin{cor}
\label{cor:m>f}
Suppose we have a normalized \eqref{eq:problem} with $A\in \R^{m\times n}$. If $m>f_1+\ldots+f_n$ then $\idxset=\solset=\emptyset$.
\end{cor}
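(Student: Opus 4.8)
The plan is to prove the statement directly by contradiction: assuming $\solset(A,\omega)\neq\emptyset$, I would derive $m\le f_1+\dots+f_n$, contradicting the hypothesis $m>f_1+\dots+f_n$. This parallels the argument behind Corollary~\ref{cor:m>n-cases} for the column-distinct case (where every $f_j=1$, so the bound reads $m\le n$); the only new feature here is that a value in the $j$-th column of $A$ may now be attained by up to $f_j$ rows rather than exactly one.

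\emph{Covering step.} Pick any $x=[x_1~\cdots~x_n]^\top\in\solset(A,\omega)$. By the elementary property of solutions recorded right after~\eqref{eq:solution-set}, for every $i\in M$ there is some $j\in N$ with $A(i,j)+x_j=0$, i.e., $A(i,j)=-x_j$, which is exactly the statement $i\in\texttt{idx}_j(A,-x_j)$. Hence
\[
M=\bigcup_{j\in N}\texttt{idx}_j(A,-x_j).
\]

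\emph{Multiplicity bound and counting.} For a fixed $j\in N$ and any $r\in\R$ I claim $|\texttt{idx}_j(A,r)|\le f_j$. Indeed, by the construction~\eqref{principal-indistinct}, two entries of the $j$-th column of $A$ receive the same value in $\bar A$ exactly when they are equal in $A$ (the partial sums appearing in~\eqref{principal-indistinct} are strictly increasing in $k$, since each summand is at least $1$), so $|\texttt{idx}_j(A,r)|=|\texttt{idx}_j(\bar A,\bar A(i,j))|$ whenever $r=A(i,j)$, which is $\le f_j$ by the definition~\eqref{eq:largest-freq} of $f_j$; and $|\texttt{idx}_j(A,r)|=0$ when $r\notin\texttt{col}_j$. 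Combining this with the covering identity and subadditivity of cardinality,
\[
m=|M|\le\sum_{j\in N}|\texttt{idx}_j(A,-x_j)|\le\sum_{j\in N}f_j,
\]
which contradicts $m>f_1+\dots+f_n$. Therefore $\solset(A,\omega)=\emptyset$, and since $\idxset(A,\omega)=\bigcup_{x\in\solset(A,\omega)}\idxset(A,\omega,x)$ (equivalently $|\idxset|\le|\solset|$), also $\idxset(A,\omega)=\emptyset$.

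\emph{Main obstacle.} The only point that needs care is the multiplicity claim $|\texttt{idx}_j(A,r)|\le f_j$, i.e., checking that~\eqref{principal-indistinct} preserves the within-column multiplicity pattern; the rest is a one-line covering argument followed by a counting inequality. Alternatively one could route the proof through $\idxset(\bar A,\omega)$ and Proposition~\ref{prop:candidate-solutions-indistinct}(i), but that would require the identity $\idxset(A,\omega)=\idxset(\bar A,\omega)$ in the column-indistinct case, which is why the direct argument above is preferable.
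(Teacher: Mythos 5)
Your proof is correct. It does, however, take a different route from the one the paper implies: the paper states Corollary~\ref{cor:m>f} without proof, and the evident intended argument (mirroring the proof of Corollary~\ref{cor:m>n-cases}) is that the lower and upper bounds in Proposition~\ref{prop:candidate-solutions-indistinct}(ii), namely $mp+n-f\leq\sum_k\bar A(i_k,k)\leq mp+n-m$, are inconsistent when $m>f$, so $\idxset(\bar A)=\emptyset$, whence $\idxset=\solset=\emptyset$. That route silently relies on two facts the paper never establishes in the column-indistinct setting: the identity $\idxset(A,\omega)=\idxset(\bar A,\omega)$ (Proposition~\ref{prop:sol-idx-abar} is only proved for column-distinct $A$), and the claim that a nonempty $\solset$ forces a nonempty $\idxset$ (asserted without proof after \eqref{eq:solution-index}). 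Your direct argument sidesteps both: you work with an arbitrary solution $x$ of $A\otimes_\omega x=\textbf{0}$, use only the covering identity $M=\bigcup_j\texttt{idx}_j(A,-x_j)$ coming from the definition of $\otimes_\omega$, and the multiplicity bound $|\texttt{idx}_j(A,r)|\leq f_j$, which you justify carefully by observing that \eqref{principal-indistinct} maps equal entries of $A(\cdot,j)$ to equal entries of $\bar A(\cdot,j)$ and distinct entries to distinct entries (strict monotonicity of the partial sums). This is more elementary, needs neither full activity nor the principal order matrix machinery beyond the definition of $f_j$, and is essentially the counting behind Proposition~\ref{prop:candidate-solutions-indistinct}(i) specialized to the point at hand. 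The paper's route buys uniformity with the rest of Section~4; yours buys a self-contained proof with fewer unstated dependencies.
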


\begin{cor}
\label{cor:m=f-cases-indistinct}
    Suppose we have a normalized \eqref{eq:problem} with $A\in \R^{m\times n}$. If $m=f_1+\ldots+f_n$ then the following conditions hold 
    \begin{itemize}
        \item[{\rm (i)}] if $(i_1,\ldots,i_n)\in \idxset$ then $ \sum_{k\in N} \bar{A}(i_k,k)=\ceils{\omega n}m+n-m$; 
        \item[{\rm (ii)}] if $(i_1,\ldots,i_n)\in \idxset$ then  $i_1,\ldots,i_n$ are distinct elements;
        \item [{\rm (iii)}] all solutions $x\in \solset$ are fully active;
        \item[{\rm (iv)}] $|\solset|=|\idxset|$ for each $\omega\in(0,1]$.
    \end{itemize}
\end{cor}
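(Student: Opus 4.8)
The plan is to follow the proof of Corollary~\ref{cor:m=n-cases}, with the hypothesis $m=f_1+\dots+f_n$ taking over the role that column‑distinctness plays there: it forces the lower and upper bounds of Proposition~\ref{prop:candidate-solutions-indistinct}(ii) to collapse to a single value. The workhorse is the zero‑counting argument already used inside the proof of Proposition~\ref{prop:candidate-solutions-indistinct}. Given a fully active solution $x$ with $x_k=-\bar A(i_k,k)$ (or $x_k=-A(i_k,k)$, depending on whether we argue with $\bar A$ or $A$; their columns share the same multiplicity pattern because the construction~\eqref{principal-indistinct} sends equal entries to equal entries and distinct entries to distinct ones), form the shifted matrix $C(i,j)=\bar A(i,j)+x_j$ and count its zero entries both ways. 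Each row of $C$ has at least one zero, because $x$ being a solution forces the $p$‑th smallest entry of that row to be $0$, where $p=\ceils{\omega n}$. Each column $k$ of $C$ has at most $f_k$ zeros, because such a zero corresponds to an entry of $\bar A(\cdot,k)$ equal to $-x_k$, and $-x_k$ occurs in that column at most $f_k$ times. Summing, $C$ has at least $m$ zeros and at most $f_1+\dots+f_n=m$ zeros, hence exactly $m$ zeros, exactly one in each row, and exactly $f_k$ in column $k$. This last observation is the one extra fact that $m=f_1+\dots+f_n$ buys us over the general case; the rest is bookkeeping.

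Concretely: for (i), if $(i_1,\dots,i_n)\in\idxset(\bar A)$ then Proposition~\ref{prop:candidate-solutions-indistinct}(ii) reads $mp+n-m\le\sum_k\bar A(i_k,k)\le mp+n-m$, so $\sum_k\bar A(i_k,k)=mp+n-m=\ceils{\omega n}m+n-m$. For (ii), take $(i_1,\dots,i_n)\in\idxset$, realized by a fully active solution $x$ with $\bar A(i_k,k)+x_k=0$ for all $k$; if two indices coincided, say $i_a=i_b$ with $a\neq b$, then row $i_a$ of $C$ would contain the two distinct zeros $C(i_a,a)$ and $C(i_a,b)$, contradicting ``one zero per row''. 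For (iii), an arbitrary $x\in\solset$ yields a matrix $C$ (built from $A$) every column of which has $f_k\ge 1$ zeros, i.e.\ at least one active element of $A$ sits in each column, so $x$ is fully active. For (iv), part (iii) makes $\idxset(A,\omega,x)$ nonempty for every $x\in\solset$, and since the rule $x_k=-A(i_k,k)$ recovers $x$ from any tuple $(i_1,\dots,i_n)$, the induced map $\idxset\to\solset$ is well defined and onto, so $|\idxset|\ge|\solset|$; together with the general inequality $|\idxset|\le|\solset|$ recorded just after~\eqref{eq:solution-index}, this gives $|\solset|=|\idxset|$.

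I expect the main obstacle to be pinning the per‑column bound down to exactly $f_k$ (rather than the cruder ``at most $m$ zeros in total''): this is precisely what makes the row count and the column count meet, and it is the only step where $m=f_1+\dots+f_n$ is genuinely used — drop it and a column of $C$ could be zero‑free, so a solution could fail to be fully active and (iii), hence also (ii), would break. A secondary subtlety lives in (iv): when some $f_k>1$, one fully active solution can be witnessed by several solution indices, so the map $\idxset\to\solset$ need not be injective; the point is that one does not need injectivity, because surjectivity together with the previously established bound $|\idxset|\le|\solset|$ already forces equality of cardinalities. I would therefore phrase (iv) through that bound rather than trying to exhibit a bijection directly.
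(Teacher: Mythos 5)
Your counting argument for parts (i)--(iii) is correct and is essentially the paper's intended route: the paper's proof consists of the single remark that when $m=f_1+\dots+f_n$ a fully active solution has exactly one active element in each row, and your two-sided count of the zeros of $C$ (at least one per row, at most $f_k$ in column $k$, hence exactly $m$ in total, exactly one per row and exactly $f_k$ per column) is precisely the justification of that remark, from which (i), (ii) and (iii) follow as you describe. In particular your observation that each column is forced to attain its maximal multiplicity $f_k\geq 1$, which yields (iii), is the right way to see where the hypothesis $m=f_1+\dots+f_n$ is used.

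Part (iv) is where your argument has a genuine gap. You correctly note that the map $\idxset\to\solset$ sending $(i_1,\dots,i_n)$ to the vector with $x_k=-A(i_k,k)$ need not be injective when some $f_k>1$, and you try to repair this by combining surjectivity with the inequality $|\idxset|\leq|\solset|$ recorded after \eqref{eq:solution-index}. But that inequality is asserted in the paper without proof and is in fact false: since distinct fully active solutions give disjoint sets $\idxset(A,\omega,x)$, one has $|\idxset|=\sum_{x}|\idxset(A,\omega,x)|$, and a single fully active solution contributes $\prod_k g_k$ tuples, where $g_k$ is the number of zeros in column $k$ of $C$ --- under the hypothesis $m=f_1+\dots+f_n$ this is $\prod_k f_k$, which exceeds $1$ as soon as some $f_k>1$. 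The paper's own $4\times 3$ example in Section~\ref{sec:general-case} (with $f_1=2$, $f_2=f_3=1$, $m=4=f$) exhibits exactly this at $\omega=\tfrac{1}{3}$: $\idxset=\{(1,3,4),(2,3,4)\}$ while $\solset$ is a single vector, so $|\idxset|=2>1=|\solset|$. Your surjection therefore only yields $|\idxset|\geq|\solset|$, and equality of cardinalities fails. The paper's own template (Corollary~\ref{cor:m=n-cases}(iv)) relies on $|\idxset(A,\omega,x)|=1$, which holds in the column-distinct case via Proposition~\ref{prop:fully active-idx} but not here; so the difficulty you flagged is real, and neither your patch nor a verbatim transfer of the paper's argument closes it. The honest conclusion available from your counting is that $\solset$ is finite and in bijection with the set of distinct column-value selections, with $|\solset|\leq|\idxset|\leq\bigl(\prod_k f_k\bigr)\,|\solset|$, rather than $|\solset|=|\idxset|$.
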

\begin{proof}
    The proofs are similar to those of Corollary~\ref{cor:m=n-cases}. It is important to note that, for a fully active solution $x$, if $m=f_1+\ldots+f_n$ then there is exactly one active element w.r.t $\omega$ and $x$ at each row of $A$. 
\end{proof}

\begin{rem}
One can see that, \eqref{principal-indistinct} is the generalization of \eqref{principal}; that is, if $A$ is distinct-column, then $ |\emph{\texttt{col}}_i|=n$ and $|\emph{\texttt{idx}}_i(A,\bigoplus\hspace*{-0.5ex} {}_{\frac{l}{|\emph{\texttt{col}}_i|}} \emph{\texttt{col}}_i)|=1$ for each $i\in N$. Following this, Proposition~\ref{prop:candidate-solutions-indistinct} is also the generalization of Proposition~\ref{prop:candidate-solutions} due to the fact that if $A$ is column distinct, then $f_1=f_2=\cdots=f_n=1$.
\end{rem}
\begin{exmp}
Suppose we have a normalized linear equation \eqref{eq:problem} where
    \[
    A = \begin{bmatrix}
        1&4&2\\
        1&2&4\\
        3&1&3\\
        4&3&1
    \end{bmatrix} ~\text{and}~ \omega\in \left\{\frac{1}{3},\frac{2}{3},1\right\}.
    \]
    We will find the fully active solutions of \eqref{eq:problem} for each $\omega$. Notice that, $\bar A=A$. Furthermore, $f_1=2,f_2=f_3=1$ and $m=f_1+f_2+f_3=4$. By Proposition~\ref{prop:candidate-solutions}, if $(i_1,i_2,i_3)\in \idxset[(bar{A})$ then \[\emph{\texttt{idx}}_1({A},{A}(i_1,1))\cup \emph{\texttt{idx}}_2(A,A(i_2,2)) \cup \emph{\texttt{idx}}_3({A},{A}(i_3,3))=\{1,2,3,4\}\] and ${A}(i_1,1)+A(i_2,2)+ A(i_3,3)= 12\omega-1$.
    \begin{itemize}
        \item[(i)] For $\omega=\frac{1}{3}$, the only tuples satisfying the constraints are $(1,3,4)$ and $(2,3,4)$, which correspond to the same vector vector $x=[-1~-1~-1]^\top$. One can check that ${A}\otimes \bar{x}=\emph{\textbf{0}}$. Consequently, we have $\idxset(\bar{A},\frac{1}{3})=\{(1,3,4),(2,3,4)\}$ which implies that $\solset(A,\frac{1}{3})=\{[-1~-1~-1]^\top \}$.
        \item[(ii)] For $\omega\in \{\frac{2}{3},1\}$, there is no tuple that satisfies the requirements. Therefore, $\idxset=\solset=\emptyset$.
    \end{itemize}
\end{exmp}
\begin{exmp}
\label{ex:fully active-indistinct}
    Suppose we have a normalized system \eqref{eq:problem} for
    \[
    A=\begin{bmatrix}
        -3 & 2 & 6\\
        -3 & 4 & 3\\
        5 & 4 & 0
    \end{bmatrix} ~\text{and}~
    \omega = \frac{2}{3}.
    \]
     The principal-order matrix for $A$ is
    \[
    \bar A = \begin{bmatrix}
        1&1&3\\
        1&2&2\\
        3&2&1
    \end{bmatrix}.
    \]
    Furthermore, from $\bar A$, we obtain $f_1=f_2=2$ and $f_3=1$. By Proposition~\ref{prop:candidate-solutions-indistinct}, if $(i_1,i_2,i_3)\in \idxset(\bar{A},\frac{2}{3})$ then \[\emph{\texttt{idx}}_1(\bar{A},\bar{A}(i_1,1))\cup \emph{\texttt{idx}}_2(\bar A,\bar A(i_2,2)) \cup \emph{\texttt{idx}}_3(\bar{A},\bar{A}(i_3,3))=\{1,2,3\}\] and $4\leq \bar{A}(i_1,1)+\bar A(i_2,2)+ \bar A(i_3,3)\leq  6$. Since $\bar{A}(1,1)=\bar A(2,1)$ and $\bar A(2,2)=\bar{A}(3,2)$, we can restrict $i_1\neq 2$ and $i_2\neq 3$. Without providing the details, the are 4 tuples satisfying the constraints: 
    \[
    (1,2,1),(1,2,2),(1,2,3),(3,1,2),
    \]
    which respectively corresponds to vectors
    \[
    \bar{x}\in \left\{
    \begin{bmatrix}
        -1\\-2\\-3
    \end{bmatrix},
    \begin{bmatrix}
        -1\\-2\\-2
    \end{bmatrix},
    \begin{bmatrix}
        -1\\-2\\-1
    \end{bmatrix},
    \begin{bmatrix}
        -3\\-1\\-2
    \end{bmatrix}
    \right\}.
    \]
    All above vectors satisfy $\bar{A}\otimes_\frac{2}{3} \bar{x}=\emph{\textbf{0}}$. Hence, from the same tuples, we have found four fully active solutions for $A\otimes_\frac{2}{3}x=\emph{\textbf{0}}$
    \[
    x\in
    \left\{
    \begin{bmatrix}
        3\\-4\\-6
    \end{bmatrix},
    \begin{bmatrix}
        3\\-4\\-3
    \end{bmatrix},
    \begin{bmatrix}
        3\\-4\\0
    \end{bmatrix},
    \begin{bmatrix}
        -5\\-2\\-3
    \end{bmatrix}
    \right\}\subseteq \solset(A,\frac{2}{3}).
    \]
\end{exmp}

\section{Relaxations}
We recall that, for $\ceils{\omega n}=n$ (max-plus case), if the principal vector $\bar{x}$  \eqref{eq:principal-solution} is a solution for \eqref{eq:problem} then it is the greatest solution \cite{Butkovic2003}: if $x$ is also a solution, then $x\leq \bar{x}$. Likewise, for $\ceils{\omega n}=1$ (min-plus case), such principal vector is the least solution. Inspired by those, we provide procedures to generate other solutions for \eqref{eq:problem} by applying ``relaxation'' to a fully active solution $x$, i.e., increasing or decreasing some elements of $x$. Such modification may produce another fully active solution or even non fully active one. Note that for general $\omega$ we need to consider the possibility that some of the components of $x$ are increased and some of the components of $x$ are decreased, which motivates the following definition. 

Consider subsets $Q,R\subseteq N$ with $Q\cap R=\emptyset$. We say that a fully active solution $x$ admits $(Q,R)$-relaxation, if there exists an $\epsilon>0$ such that every $\hat{x}=[\hat x_1~\cdots~\hat x_n]^\top$, where 
$x_j-\epsilon\leq \hat x_j< x_j$ for $j\in Q$, $x_k+\epsilon\geq \hat x_k>x_k$ for $k\in R$,  
and $\hat x_l=x_l$ for $l\in N\backslash (Q\cup R)$.

Suppose $C$ is a matrix whose entries are $C(i,j)=A(i,j)+x_j$, for all $i\in M$ and $j\in N$. Let us also introduce the following notations
\begin{equation}
\label{e:SiQiRi}
S_i=\{j\in N\colon C(i,j)=0\},\quad  Q_i=Q\cap S_i,\quad R_i=R\cap S_i,
\end{equation}
for $i\in M $. Furthermore, we denote by $q_i$ the number of 
negative 
elements in the $i$-th row of $C$ and by $r_i$ the number of 
positive 
elements in the same row.
As before, we have $p=\ceils{\omega n}$.
 
\begin{prop}
 \label{prop:relax-solution}
 Let $Q,R\subseteq N$ be such that $Q\cap R=\emptyset$. Suppose $x=[x_1~\cdots~x_n]^\top$ is a fully active solution for normalized linear equation \eqref{eq:problem} and $C$ is a matrix defined as $C(i,j)=A(i,j)+x_j$. Then $x$ admits $(Q,R)$-relaxation if and only if the following two conditions hold for all $i\in M$:
\begin{itemize}
        \item[{\rm (i)}] $|Q_i|\leq p-1-q_i$, 
        \item[{\rm (ii)}] $|R_i|\leq n-p-r_i$ 
\end{itemize}
\end{prop}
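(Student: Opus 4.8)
The plan is to analyze, for each row $i \in M$, what happens to the sign pattern of the $i$-th row of the perturbed matrix $\hat C$ defined by $\hat C(i,j) = A(i,j) + \hat x_j$. Since $x$ is a fully active solution, $\bigoplus_\omega C(i,\cdot) = 0$ for every $i$, which by the definition of the $\omega$-operation means the $i$-th row of $C$ has exactly $q_i$ negative entries, at least one zero entry, and exactly $r_i$ positive entries, with $q_i \le p-1$ and $r_i \le n-p$ (here $p = \ceils{\omega n}$ and $q_i + r_i \le n-1$). The first step is to observe that for $\epsilon$ small enough, the strictly negative entries of $C(i,\cdot)$ stay negative in $\hat C(i,\cdot)$ and the strictly positive ones stay positive, regardless of which of the $j$'s lie in $Q$, $R$, or neither — so the only entries whose sign can change are those in $S_i$, i.e. the zero entries. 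Among those zero entries, the ones indexed by $Q_i$ decrease (become negative), the ones indexed by $R_i$ increase (become positive), and the ones in $S_i \setminus (Q_i \cup R_i)$ stay at zero. Hence $\hat C(i,\cdot)$ has exactly $q_i + |Q_i|$ negative entries, exactly $r_i + |R_i|$ positive entries, and exactly $|S_i| - |Q_i| - |R_i| \ge 1$ zero entries.

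**The equivalence.** With this count in hand, $\bigoplus_\omega \hat C(i,\cdot) = 0$ holds if and only if the number of negative entries is at most $p-1$ and the number of positive entries is at most $n-p$; equivalently, the $p$-th smallest entry of $\hat C(i,\cdot)$ must be a zero, which happens precisely when there are fewer than $p$ negative entries and the zeros are not all "pushed past" position $p$ by positive entries — but since there is always at least one zero remaining, the clean statement is just: $q_i + |Q_i| \le p-1$ and $r_i + |R_i| \le n-p$. These rearrange to conditions (i) and (ii). So I would argue: if (i) and (ii) hold for all $i$, then for sufficiently small $\epsilon$ every admissible $\hat x$ satisfies $\hat C(i,\cdot)$ has $\le p-1$ negatives and $\le n-p$ positives, hence $\bigoplus_\omega \hat C(i,\cdot) = 0$ for all $i$, i.e. $A \otimes_\omega \hat x = \mathbf 0$, so $x$ admits $(Q,R)$-relaxation. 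Conversely, if (i) fails for some $i$ (say $|Q_i| > p - 1 - q_i$), then any admissible $\hat x$ makes the $i$-th row of $\hat C$ have at least $p$ negative entries, forcing $\bigoplus_\omega \hat C(i,\cdot) < 0$, so $\hat x \notin \solset$; symmetrically if (ii) fails, the $\omega$-value becomes positive. This contradicts the definition of admitting $(Q,R)$-relaxation.

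**Main obstacle.** The delicate point is the uniformity of the choice of $\epsilon$: I need a single $\epsilon > 0$ that simultaneously works for all rows $i$ and keeps all the strictly-signed entries strictly signed. This is handled by taking $\epsilon$ smaller than $\min\{|C(i,j)| : C(i,j) \ne 0\}$ over the finitely many nonzero entries, so that no nonzero entry of $C$ can cross zero under a perturbation of each coordinate by less than $\epsilon$; since $M$ and $N$ are finite this minimum is a positive real number. A second subtlety worth making explicit in the proof is that the definition of $(Q,R)$-relaxation requires the conclusion for *every* $\hat x$ in the box, including those where some $\hat x_j$ for $j \in Q$ equals exactly $x_j - \epsilon$ or is very close to $x_j$; the sign-count argument above is insensitive to the exact magnitudes (only to the signs of the shifts on $Q$, $R$, and the complement), so this causes no difficulty. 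I would also remark that the "only if" direction only needs a single bad $\hat x$, so it suffices to exhibit one, which is even easier.
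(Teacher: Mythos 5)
Your proposal is correct and follows essentially the same route as the paper's own proof: count the negative and positive entries of each row of the perturbed matrix, note that for $\epsilon$ smaller than the least absolute value of a nonzero entry of $C$ only the zero entries can change sign (those in $Q_i$ becoming negative, those in $R_i$ positive), and translate $\bigoplus_\omega \hat C(i,\cdot)=0$ into the bounds $q_i+|Q_i|\leq p-1$ and $r_i+|R_i|\leq n-p$. Your write-up is in fact somewhat more explicit than the paper's, particularly on the uniform choice of $\epsilon$ and on why at least one zero automatically survives in each row.
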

\begin{proof}
 Conditions (i) and (ii) are sufficient to ensure that, after 
 decreasing 
 the components in $Q$ and 
 increasing 
 the components in $R$ and modifying matrix $C$ accordingly, 
\begin{itemize}
\item[(i)] there are less than $p$ negative elements in $C(i,\cdot)$ (using the first condition of the theorem),
\item[(ii)] there are no more than $n-p$ positive elements in $C(i,\cdot)$
(using the second condition of the theorem).
\end{itemize}
Also observing that by choosing small enough $\epsilon$ the signs of all nonzero entries of $C$ remain unchanged, we obtain that the $p$-th smallest element is $0$ in each row of the (modified) matrix $C$, thus $A\otimes_{\omega}\hat{x}=\textbf{0}$.

The conditions are also necessary. Indeed, if the first one does not hold, then in some row there are more than 
$p-1$ negative elements 
in the modified matrix $C$ implying that the $p$-th smallest element in that row of $C$ becomes 
negative, 
a contradiction. Similarly, if the second condition does not hold, then the $p$-th smallest element becomes 
positive 
in one of the rows of $C$.
\end{proof}
 
Observe that if we add up the inequalities in conditions (i) and (ii) of Proposition~\ref{prop:relax-solution}, then we obtain $|Q_i|+|R_i|\leq n-q_i-r_i-1$, which is the same as 
\begin{equation}
\label{e:onezeroremains}
    |Q_i|+|R_i|\leq |S_i|-1.
\end{equation}
In words, we obtain a natural condition that at least one zero should remain in each row of $C$ after a relaxation.

Important special cases of $(Q,R)$-relaxations appear when $Q=\emptyset$ (thus we 
increase 
a number of components simultaneously) and $R=\emptyset$
(so we 
decrease 
a number of components simultaneously). Further if $Q=\emptyset$, $R=\{j\}$ and $x$ admits such $(Q,R)$-relaxation then $j$ is called \textit{increasable} 
with respect to $x$. Similarly if $Q=\{j\}$, $R=\emptyset$ and $x$ admits such $(Q,R)$-relaxation then $j$ is called 
\textit{decreasable} 
with respect to $x$. We then have two important corollaries of Proposition~\ref{prop:relax-solution}

\begin{cor}
    \label{cor:decrease-solution}
    An index $j\in N$ is decreasable with respect to a fully active solution $x$ if and only if the following conditions hold: for each $i\in N$ such that $C(i,j)=0$
    \begin{itemize}
        \item[{\rm (i)}] there are at least two zero elements at $C(i,\cdot)$,
        \item[{\rm (ii)}] $q_i\leq p-2$.
    \end{itemize}
\end{cor}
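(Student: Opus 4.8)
The plan is to derive Corollary~\ref{cor:decrease-solution} directly from Proposition~\ref{prop:relax-solution} by specializing to $Q=\{j\}$ and $R=\emptyset$, and then unpacking what conditions (i) and (ii) of that proposition say in this restricted setting. First I would fix a fully active solution $x$ and the associated matrix $C$ with $C(i,j)=A(i,j)+x_j$, and recall that $j$ is decreasable with respect to $x$ precisely when $x$ admits the $(\{j\},\emptyset)$-relaxation. By Proposition~\ref{prop:relax-solution}, this holds iff for every $i\in M$ we have $|Q_i|\le p-1-q_i$ and $|R_i|\le n-p-r_i$, where $Q_i=\{j\}\cap S_i$ and $R_i=\emptyset\cap S_i=\emptyset$.

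The second of these, $|R_i|\le n-p-r_i$, is automatically satisfied for all $i$: since $R=\emptyset$ we have $|R_i|=0$, and $n-p-r_i\ge 0$ always holds because $r_i$ (the number of positive entries in row $i$ of $C$) cannot exceed $n-p$ for a genuine solution $x$ — there are at most $n-p$ entries strictly above the $p$-th smallest, which is $0$. So the content of Proposition~\ref{prop:relax-solution} collapses to the first condition, $|Q_i|\le p-1-q_i$, for all $i\in M$. Now I would split on whether $j\in S_i$. If $j\notin S_i$, i.e. $C(i,j)\ne 0$, then $Q_i=\emptyset$, so $|Q_i|=0\le p-1-q_i$ is equivalent to $q_i\le p-1$, which is already guaranteed because $x$ is a solution (at most $p-1$ negative entries per row). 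Hence the only rows $i$ that impose a real constraint are those with $C(i,j)=0$, and for such rows $|Q_i|=1$, so the inequality becomes $1\le p-1-q_i$, i.e. $q_i\le p-2$.

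Finally I would translate the bound $q_i\le p-2$ back into the two stated conditions. Condition (ii) of the corollary is literally $q_i\le p-2$ for each $i$ with $C(i,j)=0$. Condition (i) — that there are at least two zeros in row $i$ of $C$ — follows from $q_i\le p-2$ together with the fact that $x$ is a solution: in row $i$ there are exactly $q_i$ negative entries, at most $n-p$ positive entries, and the remaining entries (at least $n-q_i-(n-p)=p-q_i\ge 2$ of them) are zero; so row $i$ contains at least two zeros. Conversely, in the presence of (i) and (ii) the row-$i$ inequality $|Q_i|\le p-1-q_i$ holds, so all conditions of Proposition~\ref{prop:relax-solution} are met. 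I do not anticipate a genuine obstacle here; the only point requiring a little care is making sure the ``automatic'' inequalities ($q_i\le p-1$, $r_i\le n-p$, and the derivation of at least two zeros) are justified from the standing assumption that $x$ is a solution, rather than being taken for granted.
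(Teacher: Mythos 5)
Your proposal is correct and follows essentially the same route as the paper: both specialize Proposition~\ref{prop:relax-solution} to $Q=\{j\}$, $R=\emptyset$, observe that the constraints are vacuous except in rows with $C(i,j)=0$ (where they reduce to $q_i\le p-2$), and recover condition (i) from the counting identity that row $i$ has $n-q_i-r_i$ zeros together with $r_i\le n-p$ for a solution. Your write-up is in fact slightly more explicit than the paper's about why the $R$-inequality and the rows with $C(i,j)\neq 0$ impose no constraint, but the argument is the same.
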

\begin{proof} 
We need to show that the conditions of the present claim are equivalent to the conditions of Proposition~\ref{prop:relax-solution}, for the special case which is considered. 

The first condition of the present claim can be expressed as $n-r_i-q_i\geq 2$. So for each $i$ with $C(i,j)=0$  we have $n-r_i-q_i\geq 2$ and $q_i\leq p-2$. It is easy to see that both of these inequalities are implied by $n-p-r_i\geq 0$ and $p-1-q_i\geq 1$ is obtained from the first condition of Proposition~\ref{prop:relax-solution}. On the other hand, $p-1-q_i\geq 1$ is the same as $q_i\leq p-2$, while $n-p-r_i\geq 0$ follows since $x$ is a solution.
\end{proof}
  
 \begin{cor}
   \label{cor:increase-solution}
     An index $j\in N$ is increasable with respect to a fully active solution $x$ if and only if the following conditions hold: for each $i\in N$ such that $C(i,j)=0$
    \begin{itemize}
        \item[{\rm (i)}] there are at least two zero elements in $C(i,\cdot)$,
        \item[{\rm (ii)}] $r_i\leq n-p-1$ 
    \end{itemize}  
 \end{cor}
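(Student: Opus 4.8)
The plan is to derive Corollary~\ref{cor:increase-solution} as the special case of Proposition~\ref{prop:relax-solution} with $Q=\emptyset$ and $R=\{j\}$, exactly mirroring the proof of Corollary~\ref{cor:decrease-solution}. First I would instantiate the two conditions of Proposition~\ref{prop:relax-solution} in this setting. With $Q=\emptyset$ we have $Q_i=\emptyset$ for every $i\in M$, so condition (i) of the proposition reads $0\leq p-1-q_i$, i.e.\ $q_i\leq p-1$; but since $x$ is already a solution we automatically have $q_i\leq p-1$ for all $i$, so this condition is vacuous. With $R=\{j\}$, the set $R_i=R\cap S_i$ is nonempty precisely for those $i$ with $C(i,j)=0$, and for such $i$ we get $|R_i|=1$, while $|R_i|=0$ otherwise. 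Hence condition (ii) of the proposition, $|R_i|\leq n-p-r_i$, is automatic when $C(i,j)\neq 0$ (it becomes $0\leq n-p-r_i$, which holds because $x$ is a solution) and, for $i$ with $C(i,j)=0$, becomes $1\leq n-p-r_i$, i.e.\ $r_i\leq n-p-1$. That is condition (ii) of the corollary.

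Next I would account for condition (i) of the corollary (``at least two zero elements in $C(i,\cdot)$''). This does not come directly from the stated inequalities of Proposition~\ref{prop:relax-solution} but from inequality~\eqref{e:onezeroremains}, which was derived as a consequence: after any admissible $(Q,R)$-relaxation at least one zero must remain in each row of $C$. For a row $i$ with $C(i,j)=0$, increasing $x_j$ turns that particular entry positive, so if it were the only zero in row $i$ then row $i$ would have no zero left, contradicting~\eqref{e:onezeroremains} (equivalently, the $p$-th smallest entry of that row would no longer be $0$). Conversely I would check that $r_i\le n-p-1$ together with the presence of a second zero is enough: with small $\epsilon$ the signs of all nonzero entries are preserved, the count of positive entries in row $i$ goes up by at most one (from the entry $C(i,j)$), staying $\le n-p$, the count of negative entries is unchanged, and a zero survives, so the $p$-th smallest entry remains $0$; for rows with $C(i,j)\neq 0$ nothing changes. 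Thus $A\otimes_\omega\hat x=\mathbf{0}$.

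Putting the two directions together gives the equivalence. I would phrase it as: ``The argument is identical to the proof of Corollary~\ref{cor:decrease-solution}, interchanging the roles of negative and positive entries (hence of $q_i$ and $r_i$, and of the bounds $p-1$ and $n-p$), using $Q=\emptyset$, $R=\{j\}$ in Proposition~\ref{prop:relax-solution} together with~\eqref{e:onezeroremains}.'' The only point requiring a little care -- and the ``main obstacle'', though it is minor -- is bookkeeping the asymmetry between the two sign constraints: for increasable $j$ the constraint on negative entries ($q_i\le p-1$) is free because $x$ is a solution, whereas the active constraint is on positive entries, which is the reverse of the decreasable case; one must also remember that condition (i) of the corollary is genuinely needed and comes from~\eqref{e:onezeroremains} rather than from conditions (i)--(ii) of Proposition~\ref{prop:relax-solution} verbatim. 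No new machinery is involved.
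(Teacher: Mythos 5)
Your proof is correct and follows essentially the same route as the paper: specialize Proposition~\ref{prop:relax-solution} to $Q=\emptyset$, $R=\{j\}$, note that the constraint on negative entries is automatic for a solution, identify $r_i\le n-p-1$ with condition (ii) of the proposition for rows containing a zero in column $j$, and obtain the two-zeros condition from the sum of the two inequalities (your appeal to \eqref{e:onezeroremains} is exactly that sum, which is how the paper derives it too). The only cosmetic difference is that the paper phrases the whole thing as a short equivalence of inequalities, whereas you re-verify sufficiency by hand; both are fine.
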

\begin{proof} 

The first condition of the present claim can be expressed as $n-r_i-q_i\geq 2$. So for each $i$ with $C(i,j)=0$  we have $n-r_i-q_i\geq 2$
and $r_i\leq n-p-1$. It is easy to see that both of these inequalities are implied by $p-1-q_i\geq 0$ and $n-p-r_i\geq 1$ obtained from the second condition of Proposition~\ref{prop:relax-solution}. On the other hand, $n-p-r_i\geq 1$ is the same as $r_i\leq n-p-1$, while $p-1-q_i\geq 0$ follows since $x$ is a fully active solution.
\end{proof}

Note that the above results do not describe by how much $x_j$ can be increased or decreased. 
By taking the largest increment and decrement for $x_j$, we then express the ``possibly relaxed solutions'' w.r.t a fully active solution $x$ as a vector of intervals
\begin{equation}
\label{eq:relaxed-solutions}
\mathrm{rel}(x)=
    \begin{bmatrix}
        [x_1+\delta_1, x_1+\epsilon_1]\\
        [x_2+\delta_2, x_2+\epsilon_2]\\
        \vdots\\
        [x_n+\delta_n, x_n+\epsilon_n]
    \end{bmatrix},
\end{equation}
where $\delta_j\leq 0$ and $\epsilon_j\geq 0$ for $j\in N$. Indeed, if $j$ is neither increasable nor decreasable, then $\delta_j=\epsilon_j=0$. In some cases, it is possible that there are multiple increasable or decreasable indices but the corresponding variables cannot be modified at the same time. Hence, the set of relaxed solutions in general is not the same as \eqref{eq:relaxed-solutions}, and we also define $\mathrm{Rel}(x)$ as the set of all possible relaxed solutions w.r.t. $x$.

\if{
\cref{cor:min-plus-increasable,cor:max-plus-decreasable} describe the conditions of decreasable and increasable indices for linear equations in min-plus and max-plus algebra and thus are closely related to the known results described, e.g., in~\cite{Butkovic2010}. \cref{cor:min-plus-increasable} relates the requirements for increasable indices with the appearance of 1 in each row of $\bar{A}$. On the other hand, \cref{cor:max-plus-decreasable} associates the conditions for decreasable indices with the appearance of $m$ in each row of $\bar{A}$. \tred{Interestingly, they also relate the relaxation conditions for the cases that are ``close'' to min-plus and max-plus algebra, i.e., when $\ceils{\omega n}=2$ and $\ceils{\omega n}=n-1$ respectively.}

Finally, \cref{cor:square-not-increasable-not-decreasable} shows that the fully active solutions for \eqref{eq:problem} when $m=n$ and the matrix is column distinct cannot be relaxed. Hence, number of solutions is guaranteed to be finite. In max-plus and min-plus case, this corresponds to the case where the right hand side $b$ of $A\otimes x=\textbf{b}$  belongs to the simple image set of $A$ and the solution of this system is unique~\cite{Butkovic2010}.
}\fi

In the next corollary we observe some limitations on the possibilities to increase or decrease the components of a fully active solution in certain cases of $p=\ceils{\omega n}$.

\begin{cor}
\label{cor:limitations}
Consider a normalized system \eqref{eq:problem} where $A\in \R^{m\times n}$, and let $x$ be a fully active solution of \eqref{eq:problem}.
Then
\begin{itemize}
\item[{\rm (i)}] if $p=1$, then there are no decreasable indices $j\in N$;
\item[{\rm (ii)}] if $p=2$ and if there are different decreasable indices $j$ and $k$ such that $C(i,j)=C(i,k)=0$ for some $i\in N$, then the corresponding components of $x$ cannot be decreased together;
\item[{\rm (iii)}] if $p=n$, then there are no increasable indices $j\in N$;
\item[{\rm (iv)}] if $p=n-1$ and if there are different increasable indices $j$ and $k$ such that $C(i,j)=C(i,k)=0$ for some $i\in N$, then the corresponding components of $x$ cannot be increased together.
\end{itemize}
\end{cor}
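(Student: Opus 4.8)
The plan is to derive all four parts directly from Proposition~\ref{prop:relax-solution} (and its two corollaries), treating each as a degenerate instance of the general relaxation conditions. First I would fix a fully active solution $x$, form $C(i,j)=A(i,j)+x_j$, and recall the standing notation $q_i$ (number of negatives in row $i$), $r_i$ (number of positives), and the fact that $p$-th smallest entry of each row of $C$ is $0$, which forces $q_i\leq p-1$ and $r_i\leq n-p$ for every $i\in M$.

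For part (i), $p=1$: a decreasable index $j$ requires, by Corollary~\ref{cor:decrease-solution}(ii), that $q_i\leq p-2=-1$ for each row $i$ with $C(i,j)=0$. Since such a row exists ($x$ is fully active, so every column of $C$ has a zero) and $q_i\geq 0$ always, this is impossible; hence no $j\in N$ is decreasable. Part (iii) with $p=n$ is the mirror image: Corollary~\ref{cor:increase-solution}(ii) would demand $r_i\leq n-p-1=-1$, again impossible, so no index is increasable. These two are immediate once the corollaries are invoked.

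For part (ii), $p=2$: suppose $j\neq k$ are both decreasable and $C(i,j)=C(i,k)=0$ for some row $i$. Decreasing the components indexed by $j$ and $k$ simultaneously is a $(Q,R)$-relaxation with $Q=\{j,k\}$, $R=\emptyset$, and then $Q_i=Q\cap S_i=\{j,k\}$, so $|Q_i|=2$. But condition (i) of Proposition~\ref{prop:relax-solution} requires $|Q_i|\leq p-1-q_i=1-q_i\leq 1$, a contradiction; so the two components cannot be decreased together. Part (iv) with $p=n-1$ is symmetric: taking $Q=\emptyset$, $R=\{j,k\}$ gives $|R_i|=2$ while condition (ii) forces $|R_i|\leq n-p-r_i=1-r_i\leq 1$, again a contradiction.

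I do not anticipate a genuine obstacle here: every part is a one-line specialization of already-proved statements, and the only thing to be careful about is bookkeeping — making sure the full activity of $x$ is used to guarantee the relevant rows with a zero actually exist (for (i) and (iii)) and that $Q_i,R_i$ are computed correctly as intersections with $S_i$ (for (ii) and (iv)). If anything, the mild subtlety is purely expository: one should state clearly that in (ii) and (iv) the claim is about joint relaxation, so the relevant object is the $(Q,R)$-relaxation with a two-element $Q$ (resp. $R$), not two separate single-index relaxations, since each index on its own may well be decreasable (resp. increasable).
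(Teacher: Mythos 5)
Your proposal is correct and follows essentially the same route as the paper: parts (i) and (iii) are obtained by noting that the row-count conditions of Corollaries~\ref{cor:decrease-solution} and~\ref{cor:increase-solution} become vacuous (requiring $q_i\leq -1$ or $r_i\leq -1$), and parts (ii) and (iv) by observing that a joint two-index relaxation would violate the bound $|Q_i|\leq p-1-q_i$ (resp.\ $|R_i|\leq n-p-r_i$) of Proposition~\ref{prop:relax-solution} in a row containing both zeros. The only cosmetic difference is that the paper phrases (ii) and (iv) through the single-index corollaries ("no negative element remains in that row, so only $p-1=1$ component can be decreased") rather than plugging $Q=\{j,k\}$ directly into the proposition, which changes nothing of substance.
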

\begin{proof}
 (i) 
 As $p=1$, it is straightforward 
 to see that there is no negative element at each row of $C$ (where $C$ is defined as before by $C(i,j)=A(i,j)+x_j$ for $i\in M$ and $j\in N$). Hence, the second condition of Corollary~\ref{cor:decrease-solution} is not satisfied. This implies that there are no decreasable indices.\\
(ii) Now suppose that $\ceils{\omega n}=2$. By Corollqry~\ref{cor:decrease-solution}, if $j$ is a decreasable index and $i$ is such that  $C(i,j)=0$, then there is no negative element in $C(i,\cdot)$. Hence, one can only decrease $\ceils{\omega n}-1=1$ components of $x$ at once.\\
(iii) Similar to (i) (using Corollary~\ref{cor:increase-solution}). \\
(iv) Similar to (ii) (using Corollary~\ref{cor:increase-solution}).
\end{proof}

In the case of column distinct matrices, when $m=n$, we have already seen that no relaxation is possible.

\begin{cor}
\label{cor:square-not-increasable-not-decreasable}
Consider a normalized system \eqref{eq:problem} where $A\in \R^{m\times n}$ is a column distinct matrix and suppose that $\bar A$ is the corresponding principal-order matrix for $A$. If $m=n$, then for each $\omega$ and the corresponding fully active solutions $x\in\solset$ all indices $j\in N$ are neither decreasable nor increasable. Consequently, $|\solset|$ is finite for each $\omega$.
\end{cor}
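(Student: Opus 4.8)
The plan is to track the zero pattern of the auxiliary matrix $C$ attached to a fully active solution, show that this pattern is so rigid that the necessary conditions for relaxation in Corollaries~\ref{cor:decrease-solution} and~\ref{cor:increase-solution} can never be satisfied, and then read off finiteness from Corollary~\ref{cor:m=n-cases}.

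First I would fix $\omega$ and take an arbitrary $x\in\solset$; by Corollary~\ref{cor:m=n-cases}(iii) it is automatically fully active, and by Proposition~\ref{prop:fully active-idx} its solution index $(i_1,\ldots,i_n)$ is unique, while Corollary~\ref{cor:m=n-cases}(ii) says it is a permutation of $N$. Set $C(i,j)=A(i,j)+x_j$. Since $A$ is column distinct, each column of $C$ has distinct entries, hence at most one zero; full activity forces at least one; so every column of $C$ contains exactly one zero, namely $C(i_k,k)=0$ in column $k$. Because $(i_1,\ldots,i_n)$ is a permutation, these $n$ zeros lie in $n$ distinct rows, and as $m=n$ this means each of the $n$ rows of $C$ contains exactly one zero.

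Next I would invoke the relaxation criteria for single indices. By Corollary~\ref{cor:decrease-solution}, an index $j$ can be decreasable with respect to $x$ only if some row $i$ with $C(i,j)=0$ contains at least two zeros; by Corollary~\ref{cor:increase-solution}, exactly the same ``two zeros in that row'' requirement is necessary for $j$ to be increasable. Since every row of $C$ contains exactly one zero, neither requirement can hold for any $j\in N$. Hence no index is decreasable or increasable with respect to $x$; as $x$ and $\omega$ were arbitrary, the same holds for every fully active solution and every $\omega$, so $\mathrm{rel}(x)=\{x\}$ and $\mathrm{Rel}(x)=\{x\}$ and no relaxation produces a new solution.

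For the ``consequently'' part I would argue finiteness directly: by Corollary~\ref{cor:m=n-cases}(iv) we have $|\solset|=|\idxset|$, and by Corollary~\ref{cor:m=n-cases}(ii) every tuple in $\idxset$ is a permutation of $N$, so $|\idxset|\le n!<\infty$; therefore $|\solset|$ is finite for each $\omega$. I do not expect any substantial obstacle here; the only step requiring care is the claim that $C$ has exactly one zero per row, and this is precisely where both column distinctness (giving one zero per column) and the permutation structure of the solution index from Corollary~\ref{cor:m=n-cases} are used together.
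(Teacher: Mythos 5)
Your proof is correct, but it takes a more self-contained route than the paper. The paper disposes of the whole corollary in one line, citing Corollary~\ref{cor:m=n-cases}(iv): since $|\solset|=|\idxset|$ and $\idxset$ is a set of $n$-tuples over $M$ (indeed of permutations, by part (ii)), $\solset$ is finite, and non-relaxability is then implicit because any admissible $(Q,R)$-relaxation with $Q\cup R\neq\emptyset$ would generate a continuum of solutions, contradicting finiteness. You instead prove non-relaxability directly and structurally: column distinctness plus full activity gives exactly one zero per column of $C$, the permutation structure of the unique solution index (Proposition~\ref{prop:fully active-idx} and Corollary~\ref{cor:m=n-cases}(ii)) spreads these $n$ zeros over $n$ distinct rows, and since $m=n$ every row of $C$ has exactly one zero, so the ``at least two zeros in the row'' condition required by both Corollary~\ref{cor:decrease-solution} and Corollary~\ref{cor:increase-solution} fails for every index; finiteness you then obtain separately from $|\solset|=|\idxset|\leq n!$. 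What your version buys is that it does not rely on the unstated contrapositive ``relaxable $\Rightarrow$ infinitely many solutions,'' and it exhibits exactly where column distinctness and squareness enter (this is essentially the same zero-counting that drives Proposition~\ref{prop:all-relaxed}, run in the case $m=n$ where it closes rather than opens relaxation). What the paper's version buys is brevity, at the cost of leaving that implication to the reader. Both arguments are sound.
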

\begin{proof}
    This is a direct consequence of Corollary~\ref{cor:m=n-cases}, part (iv).
\end{proof}

Example~\ref{ex:relaxation-distinct} below demonstrates the steps to find all solutions of \eqref{eq:problem} by applying relaxation when the matrix is column distinct. It should be noted that these relaxation steps still work for general case when the matrix is not necessarily column distinct. For the general case we also present Example~\ref{ex:all-relaxed-indistinct} below.
 
\begin{exmp}
    \label{ex:relaxation-distinct}
    Consider the linear equation in Example~\ref{ex:distinct-3x4}. We will demonstrate the steps to obtain solutions that are not fully active by applying relaxation
    \begin{itemize}
        \item[\emph{(i)}] For $\omega=\frac{1}{4}$, the only fully active solution is ${x}=[-2~1~2~-1]^\top$. 
        Applying Corollary~\ref{cor:increase-solution}, we find that increasable indices are $1$ and $4$, with $\epsilon_1=\epsilon_4=+\infty$. However, $x_1$ and $x_4$ cannot be increased at the same time (since the second row of a matrix $C$ defined as $C(i,j)=A(i,j)+x_j$ has two positive elements already). It can be checked that
        \[
        \solset(A,\frac{1}{4})=\mathrm{Rel}(x)=
        \begin{bmatrix}
            [-2,+\infty]\\
            [1,1]\\
            [2,2]\\
            [-1,-1]
        \end{bmatrix}\cup
        \begin{bmatrix}
            [-2,-2]\\
            [1,1]\\
            [2,2]\\
            [-1,+\infty]
        \end{bmatrix}.
        \]
        \item[\emph{(ii)}] For $\omega=\frac{1}{2}$, suppose we take $x=[-2~1~2~-3]^\top,y=[-2~1~-6~-3]^\top$ and $C_1,C_2$ defined as $C_1(i,j)=A(i,j)+x_j, C_2(i,j)=A(i,j)+y_j$:
        \[
        C_1=\begin{bmatrix}
            3&6&0&0\\
            0&5&8&-2\\
            4&0&9&-1
        \end{bmatrix}\!,~
        C_2=\begin{bmatrix}
            3&6&-8&0\\
            0&5&0&-2\\
            4&0&4&-1
        \end{bmatrix}\!.
        \]
        Based on $C_1$ and applying Corollaries \ref{cor:increase-solution} and \ref{cor:decrease-solution}, for $x$, one can only decrease the value for $x_3$ and $x_4$, but not simultaneously (applying Proposition~\ref{prop:relax-solution}). The biggest decrement for $x_3$ is $|\delta_3|=8$: if $\delta_3<-8$ then there will be two negative elements on second column of $C_1$. For $x_4$, the largest decrement is $|\delta_4|=+\infty$. 
        
        On the other hand, based on $C_2$, both $y_1$ and $y_3$ can be increased (again, not simultaneously since the second row of $C_2$ has one positive element). The corresponding biggest increments are $\epsilon_1=+\infty$ and $\epsilon_3=8$. As a result, we have
        \[\mathrm{Rel}(x)\!=\!
        \begin{bmatrix}
            [-2,-2]\\
            [1,1]\\
            [-6,2]\\
            [-3,-3]
        \end{bmatrix}\cup
        \begin{bmatrix}
            [-2,-2]\\
            [1,1]\\
            [2,2]\\
            [-\infty,-3]
        \end{bmatrix}\!,
        \mathrm{Rel}(y)\!=\!
        \begin{bmatrix}
            [-2,+\infty]\\
            [1,1]\\
            [-6,-6]\\
            [-3,-3]
        \end{bmatrix}\cup
        \begin{bmatrix}
            [-2,-2]\\
            [1,1]\\
            [-6,2]\\
            [-3,-3]
        \end{bmatrix}.
        \]
        Similarly, for $v$ and $w$ the resulting relaxation sets are
        \[\mathrm{Rel}(
        v
        )\!=\!
        \begin{bmatrix}
            [-6,-6]\\
            [-4,1]\\
            [2,2]\\
            [-1,-1]
        \end{bmatrix}\cup
        \begin{bmatrix}
            [-6,-6]\\
            [-4,1]\\
            [2,2]\\
            [-1,+\infty]
        \end{bmatrix}\!,
        \mathrm{Rel}(
        w
        )\!=\!
        \begin{bmatrix}
            [-\infty,-6]\\
            [1,1]\\
            [2,2]\\
            [-1,-1]
        \end{bmatrix}\cup
        \begin{bmatrix}
            [-6,-6]\\
            [-4,1]\\
            [2,2]\\
            [-1,-1]
        \end{bmatrix}\!.
        \]
        where $v=\begin{bmatrix}
            -6&-4&2&-1
        \end{bmatrix}^\top$ and $w=\begin{bmatrix}
            -6&1&2&-1
        \end{bmatrix}^\top$. Notice that, $\mathrm{Rel}(x)\cap\mathrm{Rel}(y)\neq \emptyset$ and $\mathrm{Rel}(v)\cap \mathrm{Rel}(w)\neq \emptyset$. Furthermore, the set of solutions $\solset(A,\frac{1}{2})$ can be expressed as the union of six sets
        \begin{align*}
            \solset(A,\frac{1}{2})=&
         \begin{bmatrix}
            [-2,-2]\\
            [1,1]\\
            [-6,2]\\
            [-3,-3]
        \end{bmatrix}\cup
        \begin{bmatrix}
            [-2,-2]\\
            [1,1]\\
            [2,2]\\
            [-\infty,-3]
        \end{bmatrix}\cup
        \begin{bmatrix}
            [-2,+\infty]\\
            [1,1]\\
            [-6,-6]\\
            [-3,-3]
        \end{bmatrix}\cup\\
        &
        \begin{bmatrix}
            [-6,-6]\\
            [-4,1]\\
            [2,2]\\
            [-1,-1]
        \end{bmatrix}\cup
        \begin{bmatrix}
            [-6,-6]\\
            [-4,1]\\
            [2,2]\\
            [-1,+\infty]
        \end{bmatrix}\cup\begin{bmatrix}
            [-\infty,-6]\\
            [1,1]\\
            [2,2]\\
            [-1,-1]
        \end{bmatrix}\!.
        \end{align*}
        \item[\emph{(iii)}] Without providing the details, for $\omega=\frac{3}{4}$, the resulting relaxation sets are
        \begin{align*}
        \mathrm{Rel}(
        p
        )\!&=\!
        \begin{bmatrix}
            [-6,-6]\\
            [-4,-4]\\
            [-6,2]\\
            [-3,-3]
        \end{bmatrix}\cup
        \begin{bmatrix}
            [-6,-6]\\
            [-4,-4]\\
            [2,2]\\
            [-\infty,-3]
        \end{bmatrix}\!,
        \mathrm{Rel}(q)\!=\!
        \begin{bmatrix}
            [-6,-6]\\
            [-4,1]\\
            [-6,-6]\\
            [-3,-3]
        \end{bmatrix}\cup
        \begin{bmatrix}
            [-6,-6]\\
            [-4,-4]\\
            [-6,2]\\
            [-3,-3]
        \end{bmatrix},\\
        \mathrm{Rel}(
        r
        )\!&=\!
        \begin{bmatrix}
            [-\infty,-6]\\
            [1,1]\\
            [-6,-6]\\
            [-3,-3]
        \end{bmatrix}\cup
        \begin{bmatrix}
            [-6,-6]\\
            [-4,1]\\
            [-6,-6]\\
            [-3,-3]
        \end{bmatrix}.
        \end{align*}
        where $p=\begin{bmatrix}
            -6&-4&2&-3
        \end{bmatrix}^\top , q=\begin{bmatrix}
            -6&1&-6&-3
        \end{bmatrix}^\top$, and \\$r=\begin{bmatrix}
            -6&-4&-6&-3
        \end{bmatrix}^\top$. The solution set $\solset(A,\frac{3}{4})$ can be expressed as the union of four sets
        \[
        \solset(A,\frac{3}{4})=\begin{bmatrix}
            [-6,-6]\\
            [-4,-4]\\
            [-6,2]\\
            [-3,-3]
        \end{bmatrix}\cup
        \begin{bmatrix}
            [-6,-6]\\
            [-4,-4]\\
            [2,2]\\
            [-\infty,-3]
        \end{bmatrix}\cup\begin{bmatrix}
            [-\infty,-6]\\
            [1,1]\\
            [-6,-6]\\
            [-3,-3]
        \end{bmatrix}\cup
        \begin{bmatrix}
            [-6,-6]\\
            [-4,1]\\
            [-6,-6]\\
            [-3,-3]
        \end{bmatrix}\!.
        \]
    \end{itemize}
\end{exmp}

\begin{exmp}
\label{ex:all-relaxed-indistinct}
    Let us reconsider the normalized system in Example~\ref{ex:fully active-indistinct}. Without giving the the details, the relaxed solution for each $x$ is
    \begin{align*}
        \mathrm{Rel}\left(\begin{bmatrix}
        3\\-4\\-6
    \end{bmatrix} \right)&=
    \begin{bmatrix}
        [3, +\infty]\\
        [-4, -4]\\
        [-6,-6]
    \end{bmatrix}\cup
    \begin{bmatrix}
        [3, 3]\\
        [-4, -4]\\
        [-6,0]
    \end{bmatrix},\hspace{2ex}
    \mathrm{Rel}\left(\begin{bmatrix}
        3\\-4\\-3
    \end{bmatrix} \right)=
    \begin{bmatrix}
        [3, 3]\\
        [-4, -4]\\
        [-6,0]
    \end{bmatrix},\\
    \mathrm{Rel}\left(\begin{bmatrix}
        3\\-4\\0
    \end{bmatrix}\right)&=
    \begin{bmatrix}
        [3, 3]\\
        [-\infty, -4]\\
        [-6,-6]
    \end{bmatrix}\cup
    \begin{bmatrix}
        [3, 3]\\
        [-4, -4]\\
        [-6,0]
    \end{bmatrix},\hspace{1.5ex}
     \mathrm{Rel}\left(\begin{bmatrix}
        -5\\-2\\-3
    \end{bmatrix}\right)=
    \begin{bmatrix}
        [-5, -5]\\
        [-2, -2]\\
        [-3,-3]
    \end{bmatrix}\!\!.
    \end{align*}
Notice that the second relaxation set is a subset of the first and the third sets. Furthermore, the last set contains a single vector. Finally, the set of solutions can be expressed as the union of four sets
\[
\solset(A,\frac{2}{3})=\begin{bmatrix}
        [3, +\infty]\\
        [-4, -4]\\
        [-6,-6]
    \end{bmatrix}\cup \begin{bmatrix}
        [3, 3]\\
        [-\infty, -4]\\
        [-6,-6]
    \end{bmatrix}\cup
    \begin{bmatrix}
        [3, 3]\\
        [-4, -4]\\
        [-6,0]
    \end{bmatrix}\cup\begin{bmatrix}
        [-5, -5]\\
        [-2, -2]\\
        [-3,-3]
    \end{bmatrix}\!\!.
\]
\end{exmp}

Examples \ref{ex:relaxation-distinct}-\ref{ex:all-relaxed-indistinct} only demonstrate the cases when the variables must be relaxed separately. The following example showcases the relaxation of variables that can be done simultaneously.

\begin{exmp}
      Suppose we have a normalized linear equation \eqref{eq:problem} where
    \[
A=\begin{bmatrix}
    2&-1&7&-3\\
    2&5&2&0\\
    2&6&3&2\\
    2&-1&6&4
\end{bmatrix}~\text{and}~\omega=\frac{1}{2}.
\]
One could check that $y=[-2~1~-3~3]^\top$ is one of the solutions. Let us define a matrix $C$ where $C(i,j)=A(i,j)+x_j$ i.e.,
    \[
    C=
    \begin{bmatrix}
        0&0&4&0\\
        0&6&-1&3\\
        0&7&0&5\\
        0&0&3&7
    \end{bmatrix}
    \]
    Notice that, based on matrix $C$, the only increasable index w.r.t. $x$ is 4. The corresponding largest increment is $\epsilon_4=+\infty$. On the other hand, the decreasable indices w.r.t $x$ are 2,3, and 4. 
    Proposition~\ref{prop:relax-solution} gives the following possible $(Q,R)$-relaxations: 1) $Q=\{2\}$, $R=\{4\}$, 2) $Q=\{3\}$, $R=\{4\}$, 
3) increase or decrease $x_4$ only, 4) $Q=\{2,3\}$, $R=\{4\}$, 5) $Q=\{3,4\}t$, $R=\emptyset$. Without giving the details, the corresponding relaxations for each of these opportunities are
    \begin{align*}
    \begin{bmatrix}
        [-2,-2]\\
        [-5,1]\\
        [-3,-3]\\
        [3,+\infty]
    \end{bmatrix}, 
    \begin{bmatrix}
        [-2,-2]\\
        [1,1]\\
        [-\infty,-3]\\
        [3,+\infty]
    \end{bmatrix},
    \begin{bmatrix}
        [-2,-2]\\
        [1,1]\\
        [-3,-3]\\
        [0,+\infty]
    \end{bmatrix},
    \begin{bmatrix}
        [-2,-2]\\
        [-5,1]\\
        [-6,-3]\\
        [3,+\infty]
    \end{bmatrix},
    \begin{bmatrix}
        [-2,-2]\\
        [1,1]\\
        [-7,-3]\\
        [0,3]
    \end{bmatrix}.
    \end{align*}
    Notice that, the first relaxation vector is subset of the fourth one. 
    Hence, $\mathrm{Rel}(x)$ is sufficient to be expressed as the union of the second, the third, the fourth and the fifth sets.
\end{exmp}

Corollary~\ref{ex:relaxation-distinct} suggests that, for column distinct cases and when $m<n$, all fully active solutions can be relaxed. Proposition~\ref{prop:all-relaxed} formally proves this condition. In fact, it also works on general cases. On the other hand, Example~\ref{ex:all-relaxed-indistinct} may also suggest that all fully active solutions can be relaxed (even when $m=n$). However, this is not true in general. The condition of $|\solset|=1$ could happen when $\ceils{\omega n}\in \{1,n\}$ as in Proposition~\ref{prop:uniquenes-butkovic}. Unlike column distinct cases, the condition for relaxed solution does not only depend on the dimension of the matrix but also on the sum of $f_j$ defined in \eqref{eq:largest-freq}.

\begin{prop}
    \label{prop:all-relaxed}
    Suppose we have a normalized \eqref{eq:problem} where $A\in \R^{m\times n}$ is a column distinct matrix and $m<n$. Then, each fully active solution $x\in \solset$ can be relaxed.
\end{prop}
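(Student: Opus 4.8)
The plan is to produce, for an arbitrary fully active solution $x$, at least one index $j\in N$ that is increasable or decreasable; by the definitions preceding Corollaries~\ref{cor:decrease-solution} and~\ref{cor:increase-solution} this is exactly what ``$x$ can be relaxed'' asks for. Set $C(i,j)=A(i,j)+x_j$ and let $(i_1,\dots,i_n)$ be the solution index of $x$, so $C(i_k,k)=0$ for every $k\in N$ (it is unique by Proposition~\ref{prop:fully active-idx}, but all I really use is that column distinctness of $A$ makes the entries of each column $C(\cdot,k)$ pairwise distinct, hence each column of $C$ has exactly one zero). In particular, for a fixed column $j$ the only row $i$ with $C(i,j)=0$ is $i=i_j$, so the universally quantified conditions in Corollaries~\ref{cor:decrease-solution}--\ref{cor:increase-solution} only need to be checked at that single row.

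First I would apply the pigeonhole principle to the map $k\mapsto i_k$ from $N$ to $M$: since $n=|N|>|M|=m$, some value $i^\ast$ is taken at least twice, so row $i^\ast$ of $C$ contains at least two zeros. Let $z\ge 2$ be the number of zeros in row $i^\ast$, and let $q_{i^\ast}$ and $r_{i^\ast}$ be the numbers of negative and positive entries in that row (notation as in the text preceding Proposition~\ref{prop:relax-solution}), so that $z+q_{i^\ast}+r_{i^\ast}=n$. Because $x$ is a solution, $q_{i^\ast}\le p-1$ and $r_{i^\ast}\le n-p$, where $p=\ceils{\omega n}$.

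The crux is a simple dichotomy: $q_{i^\ast}=p-1$ and $r_{i^\ast}=n-p$ cannot hold simultaneously, since then $z=n-q_{i^\ast}-r_{i^\ast}=1$, contradicting $z\ge 2$. Hence $q_{i^\ast}\le p-2$ or $r_{i^\ast}\le n-p-1$. Choose any column $j$ with $i_j=i^\ast$; there are $z\ge 2$ such columns. Row $i^\ast=i_j$ has at least two zeros, so condition (i) of both Corollary~\ref{cor:decrease-solution} and Corollary~\ref{cor:increase-solution} holds at $i_j$ (the only row where $C(\cdot,j)$ vanishes), and by the dichotomy condition (ii) of one of these two corollaries holds as well. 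Thus $j$ is decreasable or increasable with respect to $x$, so $x$ admits a nontrivial $(Q,R)$-relaxation and can be relaxed.

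I expect the only delicate point to be the reduction described above---that column distinctness collapses ``for each $i$ with $C(i,j)=0$'' to the single active row $i_j$, which is what lets the two corollaries be invoked from information about row $i^\ast$ alone. Beyond that the argument is pure counting: no control on the size of the admissible increment or decrement is needed, since relaxability only requires the existence of some $\epsilon>0$. For completeness one could also remark that the same scheme proves the sharper statement that \emph{every} column whose active element lies in a row of $C$ containing at least two zeros is relaxable.
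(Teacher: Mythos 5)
Your proof is correct and is essentially the paper's argument run in the contrapositive direction: the paper assumes no index is relaxable and derives that every row of $C$ has exactly one zero, contradicting the $n$ zeros (one per column, by column distinctness) spread over $m<n$ rows, whereas you apply pigeonhole directly to find a row with two zeros and then verify the hypotheses of Corollaries~\ref{cor:decrease-solution} and~\ref{cor:increase-solution}. Your explicit dichotomy ($q_{i^\ast}\le p-2$ or $r_{i^\ast}\le n-p-1$, since both equalities would force a single zero) is exactly the counting the paper compresses into the remark that its condition (ii) implies condition (i).
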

\begin{proof}
    Let us assume there exists a fully active solution ${x}\in \solset$ which cannot be relaxed. Define matrix $C$ as $C(i,j)=A(i,j)+x_j$. Since all indices $j\in N$ are neither increasable nor decreasable, by Corollary~\ref{cor:increase-solution} and Corollary~\ref{cor:decrease-solution}, one of the following conditions must hold: in each $i\in M$,
    \begin{itemize}
        \item[(i)] there is exactly one zero at $C(i,\cdot)$,
        \item[(ii)] there are exactly $p-1$ negative elements and $n-p$ positive elements at $C(i,\cdot)$ where $p=\ceils{\omega n}$
    \end{itemize}
    Notice that the second condition implies the first one. Hence, to show contradiction, it is sufficient that the first requirement cannot hold. Since $A$ is column distinct, there is exactly one zero in each column of $C$. As a result, there are exactly $n$ zero elements in $C$. However, since $m<n$, there must at least one row of $C$ with multiple zero elements. 
\end{proof}

The following corollary lists the possible numbers of solutions depending the size of the matrix for the column distinct cases. The arguments are based on Corollaries \ref{cor:m>n-cases}-\ref{cor:m=n-cases} and Proposition~\ref{prop:all-relaxed} and are omitted.
\begin{cor}
    \label{cor:num-sol-distinct}
    Suppose we have a normalized \eqref{eq:problem} where $A\in \R^{m\times n}$ is a column distinct matrix. Then, the following condition holds
    \begin{itemize}
        \item[{\rm (i)}] if $m>n$, then $|\solset|=0$,
        \item[{\rm (ii)}] if $m=n$, then $0\leq |\solset|<+\infty$,
        \item[{\rm (iii)}] if $m<n$, then $|\solset|\in \{0,+\infty\}$.
    \end{itemize}
\end{cor}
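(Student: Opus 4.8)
The plan is to prove each of the three cases by directly invoking the results already established for column distinct matrices, which is why the statement says ``the arguments are based on Corollaries~\ref{cor:m>n-cases}--\ref{cor:m=n-cases} and Proposition~\ref{prop:all-relaxed}.'' The overall strategy is to first read off the size-dependent structural facts, then translate ``all fully active solutions can be relaxed'' into ``infinitely many solutions,'' and finally combine with the fact that all solutions are captured (up to relaxation) by finitely many fully active solutions.

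First I would handle part (i). If $m>n$, Corollary~\ref{cor:m>n-cases} gives $\idxset=\solset=\emptyset$ immediately, so $|\solset|=0$ and there is nothing further to check. Next, part (ii): if $m=n$, then by Corollary~\ref{cor:m=n-cases}(iv) we have $|\solset|=|\idxset|$, and $\idxset=\idxset(\bar A)$ by Proposition~\ref{prop:sol-idx-abar}. The set $\idxset(\bar A)$ consists of tuples $(i_1,\dots,i_n)$ that are permutations of $N$ (Corollary~\ref{cor:m=n-cases}(ii)), and there are only finitely many such permutations; hence $|\solset|<+\infty$. The lower bound $0$ is attained (e.g.\ when the principal candidate fails), so $0\le |\solset|<+\infty$. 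For completeness one might note that Corollary~\ref{cor:square-not-increasable-not-decreasable} already records that no relaxation is possible in this case, so the finite set of fully active solutions is literally the entire solution set.

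For part (iii), suppose $m<n$. If $\solset=\emptyset$ then $|\solset|=0$. Otherwise $\solset\neq\emptyset$, so there is at least one fully active solution $x$: indeed, by definition $\idxset=\bigcup_{x\in\solset}\idxset(A,\omega,x)$ is nonempty whenever $\solset$ is nonempty, and each tuple in $\idxset$ yields a fully active solution. By Proposition~\ref{prop:all-relaxed}, every fully active solution $x$ can be relaxed; this means there is a genuine interval of solutions obtained by perturbing at least one component of $x$ (either an increasable or a decreasable index exists, by Corollaries~\ref{cor:increase-solution}--\ref{cor:decrease-solution}), and an interval of real perturbations is an infinite set. Hence $|\solset|=+\infty$. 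Combining, $|\solset|\in\{0,+\infty\}$.

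The one point that requires a little care — and the place where I expect the only real friction — is part (iii): one must be sure that ``$x$ can be relaxed'' actually produces \emph{new} solutions, i.e.\ that the relaxation $\hat x$ with $\hat x_j\neq x_j$ for some $j$ genuinely satisfies $A\otimes_\omega\hat x=\textbf{0}$ and is distinct from $x$. This is exactly what the definition of $(Q,R)$-relaxation together with Proposition~\ref{prop:relax-solution} guarantees: for small enough $\epsilon>0$ every such $\hat x$ in the prescribed box is a solution, and since $\epsilon$ ranges over a nondegenerate interval we get infinitely many distinct solution vectors. So the obstacle is merely bookkeeping: making explicit that ``relaxable'' $\Rightarrow$ ``a continuum of nearby solutions'' $\Rightarrow$ $|\solset|=+\infty$. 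No new ideas beyond the cited results are needed, which is why the paper omits the details.
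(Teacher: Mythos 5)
Your proof is correct and follows exactly the route the paper indicates (the paper omits the details but states that the argument rests on Corollaries~\ref{cor:m>n-cases}--\ref{cor:m=n-cases} and Proposition~\ref{prop:all-relaxed}, which is precisely what you fill in). Your added care in part (iii) --- that relaxability yields a nondegenerate interval of genuinely new solutions, hence $|\solset|=+\infty$ --- is the right bookkeeping and consistent with the definition of $(Q,R)$-relaxation and Proposition~\ref{prop:relax-solution}.
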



Finally, Corollary~\ref{cor:num-sol-indistinct} presented below is the generalization of Corollary~\ref{cor:num-sol-distinct} by taking account the maximum number of duplicates in each row of the normalized matrix in \eqref{eq:problem}. We recall that, the column distinct cases imply $f_1+\ldots+f_n=n$. Part (i) is due to Corollaries~\ref{cor:m>n-cases} and~\ref{cor:m>f} while part (ii) is due to Corollaries~\ref{cor:m=n-cases} and~\ref{cor:m=f-cases-indistinct}. Part (iv) is due to Proposition~\ref{prop:all-relaxed}. Lastly, part (iii) is the unique case which only happens when the matrix is square and not column distinct. The uniqueness of solution for \eqref{eq:problem} is possible when $\ceils{\omega n}\in \{1,n\}$. Furthermore,  as demonstrated in Example~\ref{ex:all-relaxed-indistinct}, $|\solset|=+\infty$ is also a possibility.

\begin{cor}
    \label{cor:num-sol-indistinct}
    Suppose we have a normalized \eqref{eq:problem} 
    and let us define $f=f_1+\ldots+f_n$. Then the following claims are true:
    \begin{itemize}
        \item[{\rm (i)}] if $m>f\geq n$, then $|\solset|=0$,
        \item[{\rm (ii)}] if $m=f\geq n$, then $0\leq |\solset|<+\infty$,
        \item[{\rm (iii)}] if $m=n<f$, then $0\leq |\solset|\leq  +\infty$,
        \item[{\rm (iv)}] if $m<n\leq f$, then $ |\solset|\in \{0, +\infty\}$.
    \end{itemize}
\end{cor}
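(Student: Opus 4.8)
The plan is to run a case analysis on the sizes of $m$, $n$ and $f=f_1+\cdots+f_n$, in each regime invoking the structural facts of Sections~\ref{sec:colum-distinct}--\ref{sec:general-case} together with the relaxation results of the present section. The one preliminary remark I would make is that $f_j\ge 1$ for every $j$, so $f\ge n$ unconditionally; this is why each regime carries the hypothesis $f\ge n$ (resp.\ $f>n$), and in particular regime~(iv) is simply the hypothesis $m<n$.

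Regimes~(i) and~(ii) are almost immediate. If $m>f$ then $m>f\ge n$, which is the hypothesis of Corollary~\ref{cor:m>f}, so $\idxset=\solset=\emptyset$ and $|\solset|=0$; this gives~(i). If $m=f$, then Corollary~\ref{cor:m=f-cases-indistinct}(iv) gives $|\solset|=|\idxset|$, and since $\idxset\subseteq M^n$ we have $|\idxset|\le m^n<+\infty$, hence $0\le|\solset|<+\infty$; this gives~(ii). One can moreover note, using Corollary~\ref{cor:m=f-cases-indistinct}(iii) and the observation in its proof that in this regime each row of the matrix $C$ with $C(i,j)=A(i,j)+x_j$ carries exactly one active element, that by Corollaries~\ref{cor:decrease-solution} and~\ref{cor:increase-solution} no index is decreasable or increasable, so no further solutions are produced by relaxation; but the cardinality bound does not need this.

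For regime~(iv) assume $m<n$. If $\solset=\emptyset$ there is nothing to prove; otherwise $\idxset\neq\emptyset$, so a fully active solution $x$ exists, and it suffices to show that $x$ admits a relaxation, for then some index $j$ is decreasable or increasable and, by the definition of such an index together with Proposition~\ref{prop:relax-solution}, every $\hat x$ agreeing with $x$ outside the $j$-th coordinate and with $\hat x_j$ moved into a half-open interval of positive length is again a solution, whence $|\solset|=+\infty$. When $\bar A$ is column distinct (the subcase $f=n$) this is precisely Proposition~\ref{prop:all-relaxed}. For the general subcase I would adapt its proof: if $x$ admitted no relaxation, then no index would be decreasable or increasable, and one would like to conclude via Corollaries~\ref{cor:decrease-solution} and~\ref{cor:increase-solution} that each row of $C$ contains exactly one zero, so that $C$ has exactly $m$ zeros; but $x$ is fully active, so every column of $C$ contains a zero and $C$ has at least $n>m$ zeros, a contradiction. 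I expect the real work of the argument to be concentrated in this last step: when columns of $A$ need not be distinct, a column of $C$ can carry several zeros, so ``no index decreasable or increasable'' no longer reads off row by row, and one must combine the local dichotomy given by Corollaries~\ref{cor:decrease-solution} and~\ref{cor:increase-solution} (a row of $C$ with at least two zeros is always either decrease-ready or increase-ready) with a counting argument on the bipartite zero-pattern of $C$, exploiting $m<n$ to locate a column all of whose incident rows share a common readiness type, which makes that column decreasable or increasable.

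Regime~(iii), $m=n<f$, is the degenerate one: the asserted bound $0\le|\solset|\le+\infty$ is trivial, and the content of the claim is that nothing sharper holds, i.e.\ that all three behaviours genuinely occur. Unsolvable systems give $|\solset|=0$; Example~\ref{ex:all-relaxed-indistinct} exhibits a square, column-indistinct matrix with $|\solset|=+\infty$; and when $\ceils{\omega n}\in\{1,n\}$ the system collapses to a min-plus, respectively max-plus, linear system, for which Proposition~\ref{prop:uniquenes-butkovic} yields $|\solset|=1$ in the appropriate cases, so an intermediate positive finite cardinality is possible as well. Combining the four regimes completes the proof.
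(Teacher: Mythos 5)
Your treatment of parts (i)--(iii) matches the paper's: (i) is exactly Corollary~\ref{cor:m>f}, (ii) follows from Corollary~\ref{cor:m=f-cases-indistinct} together with the finiteness of $\idxset\subseteq M^n$, and (iii) is indeed vacuous as stated (the paper only adds examples showing all three behaviours occur). The problem is part (iv), and you have correctly located where the difficulty sits: the paper simply invokes Proposition~\ref{prop:all-relaxed}, whose proof is written only for column distinct matrices, together with the unproved remark that it ``also works on general cases''. So your proposal is no less complete than the paper's on this point --- but the repair you sketch does not work, because the underlying claim ``every fully active solution with $m<n$ admits a relaxation'' is false once columns may contain repeated entries.

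Concretely, take $\omega=\tfrac12$, $m=4$, $n=6$, $p=3$ and
\[
A=\begin{bmatrix}0&0&0&-1&-1&1\\-1&-1&1&0&0&0\\0&0&0&1&1&1\\1&1&1&0&0&0\end{bmatrix},\qquad x=\mathbf{0}.
\]
Here $m<n\leq f$ (each $f_j=2$), every row has third-smallest entry $0$, and every column contains a zero, so $x$ is a fully active solution. With $C=A$ one gets $S_1=S_3=\{1,2,3\}$, $S_2=S_4=\{4,5,6\}$, $q_1=q_2=2=p-1$ and $r_3=r_4=3=n-p$. By Proposition~\ref{prop:relax-solution}, rows $1$ and $2$ force $Q=\emptyset$ and rows $3$ and $4$ force $R=\emptyset$: no $(Q,R)$-relaxation of $x$ exists at all. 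In particular the column you hope to locate --- one all of whose incident rows share a common readiness type --- does not exist: every column meets one decrease-blocked and one increase-blocked row, so Corollaries~\ref{cor:decrease-solution} and~\ref{cor:increase-solution} give nothing. (Your dichotomy that a row with at least two zeros is decrease-ready or increase-ready is correct, but the two types can distribute adversarially over the columns when $m<n$.) Whether $|\solset|$ is nevertheless infinite for this $A$ because of solutions elsewhere is a separate question; what is certain is that neither your argument nor the paper's establishes part (iv) outside the column distinct case, and this part needs either a genuinely different proof or a weakened statement.
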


\if{

}\fi


\begin{thebibliography}{10}

\bibitem{baccelli92}
F.~Baccelli, G.~Cohen, G.~J. Olsder, and J.-P. Quadrat.
\newblock {\em Synchronization and linearity: an algebra for discrete event
  systems}.
\newblock John Wiley \& Sons Ltd, 1992.

\bibitem{Butkovic2003}
P.~Butkovi{\v{c}}.
\newblock Max-algebra: the linear algebra of combinatorics?
\newblock {\em Linear Alg. Appl.}, 367:313--335, 2003.

\bibitem{Butkovic2010}
P.~Butkovi{\v{c}}.
\newblock {\em Max-linear systems: theory and algorithms}.
\newblock Springer, London, 2010.

\bibitem{cassandras2008}
C.~Cassandras and S.~Lafortune, editors.
\newblock {\em Introduction to discrete event systems}, Boston, MA, 2008.
  Springer.

\bibitem{cuninghamegreen}
R.~A. Cuninghame-Green.
\newblock Minimax algebra.
\newblock {\em Lecture Notes in Economics and Mathematical Systems}, 166, 1979.

\bibitem{dotoli2006}
M.~Dotoli and M.~Fanti.
\newblock An urban traffic network model via coloured timed {Petri} nets.
\newblock {\em Control Engineering Practice}, 14(10):1213--1229, 2006.

\bibitem{GG-98}
S.~Gaubert and J.~Gunawardena.
\newblock The duality theorem for min-max functions.
\newblock {\em C.R.A.S. Paris, S{\'e}rie I}, 326:43--48, 1998.

\bibitem{Gun1994}
J.~Gunawardena.
\newblock Min-max functions.
\newblock {\em Discrete Event Dynamic Systems}, 4(4):377--407, 1994.

\bibitem{Heid2006}
B.~Heidergott, G.-J. Olsder, and J.~van~der Woude.
\newblock {\em Max-plus at work}.
\newblock Princeton University Press, 2006.

\bibitem{imaev2008}
A.~Imaev and R.~Judd.
\newblock Hierarchial modeling of manufacturing systems using max-plus algebra.
\newblock In {\em Proc. of the 2008 American Control Conference}, pages
  471--476. IEEE, 2008.

\bibitem{EPthesis}
E.~L. Patel.
\newblock {\em Maxmin-plus models of asynchronous computation}.
\newblock PhD thesis, The University of Manchester (United Kingdom), 2012.

\bibitem{Subiono}
Subiono.
\newblock {\em On classes of min-max-plus systems and their applications}.
\newblock PhD thesis, TU Delft, 2000.

\bibitem{SO-97}
Subiono and G.~Olsder.
\newblock On bipartite min-max-plus systems.
\newblock In {\em Proceedings of the 1997 European control conference}, pages
  766--771, 1997.

\bibitem{Zimmerman2007}
A.~Zimmermann.
\newblock {\em Stochastic discrete event systems}.
\newblock Springer, Berlin, 2007.

\end{thebibliography}

\begin{thebibliography}{00}


\bibitem{baccelli92} Baccelli, F., Cohen, G., Olsder, G.J., \& Quadrat, J.P., Synchronization and linearity: an algebra for discrete event systems, 1992.

\bibitem{cassandras2008} Cassandras, C.G., \& Lafortune, S. (Eds.), \textit{Introduction to discrete event systems}. Boston, MA: Springer US, 2008.

\bibitem{imaev2008}  Imaev, A., \& Judd, R.P., ``Hierarchial modeling of manufacturing systems using max-plus algebra", In 2008 American Control Conference (pp. 471--476), IEEE, June 2008.

\bibitem{dotoli2006} Dotoli, M., \& Fanti, M.P. (2006). ``An urban traffic network model via coloured timed Petri nets". \textit{Control Engineering Practice}, \textbf{14}(10), 1213--1229, 2006.

\bibitem{Zimmerman2007}  Zimmermann, A., \textit{Stochastic discrete event systems}. Springer, Berlin Heidelberg New York, 2007.

\bibitem{cuninghamegreen} Cuninghame-Green R.A., ``Minimax Algebra", in \textit{Lecture Notes in Economics and Mathematical Systems}, \textbf{166}, Springer, Berlin, 1979.

\bibitem{Butkovic2010} Butkovi\v{c} P., ``Max-Linear Systems: Theory and Algorithms", in \textit{Springer Monographs in Mathematics}, Springer-Verlag, London, 2010.

\bibitem{Butkovic2003} Butkovi\v{c} P., ``Max-algebra: the linear algebra of combinatorics?". \emph{Linear Algebra and its Applications}, \textbf{367}(313--335), doi = \url{https://doi.org/10.1016/S0024-3795(02)00655-9}, 2003.

\bibitem{Gun1994} Gunawardena, J., ``Min-max functions". \textit{Discrete Event Dynamic Systems}, \textbf{4}(4), pp.377-407, 1994.

\bibitem{Heid2006} Heidergott, B., Olsder, G.J., Van Der Woude, J., \& van der Woude, J.W.. \textit{Max Plus at work: modeling and analysis of synchronized systems: a course on Max-Plus algebra and its applications (Vol. 13)}. Princeton University Press, 2006.

\bibitem{EPthesis} Patel, Ebrahim L. Maxmin-plus models of asynchronous computation. The University of Manchester (United Kingdom), 2012.

\end{thebibliography}
\end{document}